\newcommand{\s}[1]{{\mathcal #1}}
\newcommand{\sr}[1]{{\mathscr #1}}
\newcommand{\bb}[1]{{\mathbb #1}}
\newcommand{\vd}[2]{\dfrac{\delta #1}{\delta #2}}
\newcommand{\ip}[2]{\left\langle #1,#2 \right\rangle}
\newtheorem{theorem}{Theorem} 
\newtheorem{lemma}[theorem]{Lemma}
\newtheorem{proposition}[theorem]{Proposition}
\newtheorem{remark}[theorem]{Remark}
\newtheorem{assumption}[theorem]{Assumption}
\numberwithin{equation}{section}
\numberwithin{theorem}{section}
\newcounter{step}
\begin{document}
	
	\title
	{Remarks on potential mean field games}
	
	\author{P.~Jameson Graber}
	\thanks{The author is grateful to be supported by National Science Foundation through NSF Grant DMS-2045027.}
	\address{J. Graber: Baylor University, Department of Mathematics;\\
		Sid Richardson Building\\
		1410 S.~4th Street\\
		Waco, TX 76706\\
		Tel.: +1-254-710- \\
		Fax: +1-254-710-3569 
	}
	\email{Jameson\_Graber@baylor.edu}
	
	
	\subjclass[2020]{35Q89, 49N80, 91A16} 
	\date{\today}   
	
	\begin{abstract}
		In this expository article, we give an overview of the concept of potential mean field games of first order.
		We give a new proof that minimizers of the potential are equilibria by using a Lagrangian formulation.
		We also provide criteria to determine whether or not a game has a potential.
		Finally, we discuss in some depth the selection problem in mean field games, which consists in choosing one out of multiple Nash equilibria.
	\end{abstract}
	
	\keywords{mean field games, mean field type control, selection problem, Nash equilibrium, potential games, PDE constrained optimization, Lagrangian formulation, entropy solutions}

	\maketitle
	
	
	\section{Introduction}

	Mean field game theory was introduced around 20 years ago as a framework for analyzing the behavior of large numbers of agents playing strategically, with the earliest references generally attributed to Lasry and Lions \cite{lasry06,lasry06a,lasry07} and Caines, Huang, and Malham\'e \cite{huang2006large}.
	Since then there has been a wealth of literature published on both mean field games and its companion theory of mean field type control, which can be defined as the control of McKean-Vlasov type equations or, alternatively, of transport type partial differential equations (PDE).
	The interested reader can examine the lecture notes \cite{achdou2021mean}, the monographs \cite{carmona2017probabilistic,carmona2017probabilisticII,bensoussan2013mean}, the lectures of Lions at the Coll\`ege de France \cite{lions07} (see the notes by Cardaliaguet \cite{cardaliaguet2010notes}), and the survey article \cite{gomes2014mean}.
	In this article, we are going to focus on the case in which the two theories overlap, i.e.~a mean field game that is also a mean field type control problem; such games are called \emph{potential games}.
	It is intended to be expository.
	Though we will prove one new result, the emphasis will be on explaining what is known about the topic using a wide variety of sources.
	It is nevertheless inevitable that many relevant sources will be missing, and for this the author apologizes in advance.
	
	Let us briefly state the idea of a mean field game.	
	Assume there is a large number of agents, each of which chooses a trajectory $\gamma$ in order to minimize some cost, which may have the form
	\begin{equation} \label{cost}
		J(m,\gamma) = \int_0^T \del{L\del{\gamma(t),\dot{\gamma}(t)} + f\del{m(t),\gamma(t)}}\dif t + g\del{m(T),\gamma(T)}.
	\end{equation}
	Here $m(t)$ represents a distribution of states (or positions) at time $t$.
	The idea is that any given player will have knowledge of the distribution of the states of all agents, at least through the functions $f$ and $g$.
	For any initial state $x$, we say that a curve $\gamma_x(t)$ is a \emph{best response} to the flow $m(t)$ provided that $\gamma = \gamma_x(\cdot)$ minimizes $J(m,\gamma)$.
	Now fix an initial measure $m_0$, representing the distribution of the players' starting positions.
	If every player chooses a best response to the flow $m(t)$, then there is a resulting flow of measure $\mu(t)$, giving the distribution of players' positions at time $t$ (for instance, if the best response $\gamma_x(t)$ is unique for each $x$, then $\mu(t)$ is the push-forward of $m_0$ through the map $x \mapsto \gamma_x(t)$).
	We call $\mu(t)$ a best response to $m(t)$.
	A \emph{Nash equilibrium} is defined to be a flow of measures $m(t)$ that is a best response to itself.
	
	Using principles from optimal control theory and continuum mechanics, we can derive a system of PDE to characterize Nash equilibrium.
	Introducing a value function
	\begin{equation}
		u(t,x) = \inf_{\gamma(t) = x}\cbr{\int_t^T \del{L\del{\gamma(\tau),\dot{\gamma}(\tau)} + f\del{m(\tau),\gamma(\tau)}}\dif t + g\del{m(T),\gamma(T)}},
	\end{equation}
	we see that formally $u$ should be the solution of a Hamilton-Jacobi equation
	\begin{equation*}
		-\partial_t u(t,x) + H\del{x,D_x u(t,x)} = f\del{m(t),x}
	\end{equation*}
	with Hamiltonian given by taking the Legendre transform of $L$ with respect to the velocity:
	\begin{equation}
		H(x,p) := L^*(x,-p) = \sup_v \del{-p \cdot v - L(x,v)}.
	\end{equation}
	Then the optimal feedback control is (formally) given by $-D_p H\del{x,D_x u(x,t)}$, i.e.~best response curves are trajectories of the flow
	\begin{equation*}
		\dot{\gamma}(t) = -D_p H\del{\gamma(t),D_x u\del{\gamma(t),t}}.
	\end{equation*}
	From continuum mechanics, this implies that the best response $\mu(t)$ solves the continuity equation
	\begin{equation*}
		\partial_t \mu - \nabla \cdot \del{D_p H(x,D_x u)\mu} = 0.
	\end{equation*}
	In Nash equilibrium, $\mu = m$.
	The resulting system of PDE for the pair $(u,m)$ is
	\begin{equation} \label{mfg system}
		\begin{aligned}
			&-\partial_t u + H(x,D_x u) = f(m,x), &x \in \Omega, t \in (0,T),\\
			&\partial_t m - \nabla \cdot \del{D_p H(x,D_x u)m} = 0, &x \in \Omega, t \in (0,T)\\
			&m(0) = m_0, \quad u(x,T) = g(m(T),x).
		\end{aligned}
	\end{equation}
	The spatial domain $\Omega$ could be all of Euclidean space $\bb{R}^d$, the torus $\bb{T}^d := \bb{R}^d / \bb{Z}^d$ in the case of periodic boundary conditions, or a bounded domain with smooth boundary, on which we impose a no-flux boundary condition, formally written ${\bf n} \cdot D_p H(x,D_x u)m = 0$ for $x \in \partial \Omega$.
	Here and in the entire article we treat only \emph{first order} mean field games, meaning that the control problem is purely deterministic, and thus the resulting PDE system is of first order.
	
	In \cite[Section 2.6]{lasry07} Lasry and Lions make the observation that \eqref{mfg system} has an interpretation as a first-order condition of optimality for the optimal control of the flow of measures.
	Formally, solutions of \eqref{mfg system} can be derived from finding critical points of the following cost functional:
	\begin{equation} \label{potential}
		\s{J}(m,v) = \int_0^T \del{\int_{\Omega} L\del{x,v(t,x)}m(t,\dif x) + F\del{m(t)}}\dif t + G\del{m(T)}.
	\end{equation}
	Here $F$ and $G$ are \emph{potentials} of $f$ and $g$, respectively, in the sense that $f$ (resp.~$g$) is a linear derivative of $F$ (resp.~$G$), as defined in Section \ref{sec:preliminaries} below.
	The argument $v = v(t,x)$ is the vector field that generates the flow of measures, i.e.~$m$ and $v$ are related through the continuity equation
	\begin{equation} \label{continuity eq}
		\partial_t m + \nabla \cdot (mv) = 0,
	\end{equation}
	which holds in the sense of distributions.
	The cost appearing in \eqref{potential} can be thought of as a \emph{potential} of the individual cost appearing in \eqref{cost}, i.e.~if we take the derivative of \eqref{potential} with respect to the measure $m$, then we will get \eqref{cost}.
	We will make this claim more precise below in Sections \ref{sec:preliminaries} and \ref{sec:lagrangian}.
	
	Lasry and Lions' observation about the mean field game proposed in \cite{lasry07} implies that it is a \emph{potential game}, a term which has long been used in the literature on game theory \cite{monderer1996potential}.
	In the mathematics literature on mean field games, one does not always see the term ``potential game'' explicitly used.
	However, many of the first results on weak solutions for first-order models, or second-order models with degenerate diffusion, were based on the fact that those games are potential \cite{cardaliaguet2015weak,cardaliaguet2014mean,cardaliaguet2015second,cardaliaguet2016first}.
	The methods in these papers are largely inspired by optimal transport theory, especially the Benamou-Brenier dynamic formulation from \cite{benamou2000computational} (see also \cite{cardaliaguet2013geodesics}).
	We will outline these methods below in Section \ref{sec:pde optimization}.
	
	The purpose of this article is to provide an overview of known results and open problems about potential mean field games.
	It will be useful to draw on a wide range of sources, ranging from PDE literature to theoretical economics.
	There are at least two reasons to take a special interest in potential games.
	One is computational: there is a rich collection of mathematical techniques that can be used to find minimizers of a potential, and if we can prove such a minimizer exists, then we have proved the existence of a Nash equilibrium as well.
	Another reason to study potential games is out of consideration for the \emph{selection problem}, which we can briefly summarize as follows: if there are multiple Nash equilibria, how can we rationally choose one as preferable to another?
	In a potential game, all minimizers of the potential are Nash equilibria, but not all equilibria need to be minimizers of the potential.
	Thus, to give a preview of things to come, we can say the potential provides a \emph{selection principle}, i.e.~a justification for choosing one Nash equilibrium over others.
	Selection will be a major theme of this article and will be covered in Section \ref{sec:selection}.
	
	In the remainder of this introduction, we will lay out the history of potential games from two different points of view.
	First, we will explain how the literature on mean field games has taken the perspective of PDE constrained optimization in order to produce both theoretical and computational results.
	Then we will take a brief look at some economics literature on potential games and see how it relates to our mean field setting.
	Although the results from this literature cannot be immediately applied to differential games, they are nevertheless illuminating and suggest certain avenues of future research in mean field games.
	Following this historical background, we will shift points of view from the Eulerian (i.e.~using the PDE system) to the Lagrangian (tracking the distribution of paths), which will provide the setting for this paper's main result.

	\subsection{PDE constrained optimization} \label{sec:pde optimization}
	Assume the potential $\s{J}$ from \eqref{potential} is convex in the variables $(m,mv)$.
	One can construct a unique Nash equilibrium using the following steps (cf.~\cite{benamou2017variational}).
	\begin{enumerate}
		\item The constraint \eqref{continuity eq}, i.e.~the continuity equation, can be written in weak form as follows:
		\begin{equation*}
			\int_0^T \int_{\Omega} \del{\partial_t u(t,x) + D_x u(t,x)\cdot v(t,x)}m(t,\dif x)
			- \int_{\Omega} u(T,x)m(T,\dif x) + \int_{\Omega} u(0,x)m_0(\dif x) = 0
		\end{equation*}
		for all smooth test functions $u$.
		Using $u$ as a Lagrange multiplier, we can rewrite the constrained problem in Lagrangian form as
		\begin{align*}
			\inf_{(m,v)} \s{J}(m,v) &= \inf_{(m,v)} \sup_u \s{J}(m,v) + \int_0^T \int_{\Omega} \del{\partial_t u(t,x) + D_x u(t,x)\cdot v(t,x)}m(t,\dif x)\\
			&\quad - \int_{\Omega} u(T,x)m(T,\dif x) + \int_{\Omega} u(0,x)m_0(\dif x).
		\end{align*}
		\item Use a formal min-max exchange to identify a dual problem $\s{J}^*$ for the adjoint state $u$:
		\begin{align*}
			\inf_{(m,v)} \s{J}(m,v) 
			&= \sup_u \inf_{(m,v)} \int_0^T \del{\int_{\Omega} \del{L\del{v(t,x)} + \partial_t u(t,x) + D_x u(t,x)\cdot v(t,x)}m(t,\dif x) + F\del{m(t)}}\dif t\\
			&\quad 	+ G\del{m(T)} - \int_{\Omega} u(T,x)m(T,\dif x) + \int_{\Omega} u(0,x)m_0(\dif x)\\
			&= \sup_u \inf_{m} \int_0^T \del{\int_{\Omega} \del{\partial_t u(t,x) - H\del{x,D_x u(t,x)}}m(t,\dif x) + F\del{m(t)}}\dif t\\
			&\quad 	+ G\del{m(T)} - \int_{\Omega} u(T,x)m(T,\dif x) + \int_{\Omega} u(0,x)m_0(\dif x)\\
			&= \sup_u \int_0^T -F^*\del{-\partial_t u(t,\cdot) + H\del{\cdot,D_x u(t,\cdot)}}\dif t - G^*\del{u(T,\cdot)}
			=: -\inf_u \s{J}^*(u).
		\end{align*}
		\item Using the Fenchel-Rockafellar Theorem \cite{ekeland1976convex}, one can establish the above identity rigorously, and at the same time one proves that $\s{J}$ has a minimizer $(m,v)$.
		\item To prove the existence of an adjoint state, one must suitably relax the dual problem $\s{J}^*(u)$.
		With a slight abuse of notation, we define
		\begin{equation*}
			\s{J}^*(u,\alpha) = \int_0^T F^*\del{\alpha(t,\cdot)}\dif t + G^*\del{u(T,\cdot)}
		\end{equation*}
		for pairs $(u,\alpha)$ satisfying
		\begin{equation*}
			-\partial_t u + H\del{x,D_x u} \leq \alpha.
		\end{equation*}
		Under suitable conditions on $F$ and $G$, it is possible to prove the existence of a minimizing pair $(u,\alpha)$.
		Another technical step is to prove that $\min \s{J}^*(u,\alpha) = \inf \s{J}^*(u)$.
		See for instance \cite[Section 2.2]{cardaliaguet2014mean}.
		See also \cite{graber2014optimal}.
		\item Finally, one can show that the minimizers of primal and dual problems satisfy the PDE system \eqref{mfg system} in some suitable weak sense.
		For first-order and degenerate second-order problems in which $m$ is required to be a density (and $F$ and $G$ depend on pointwise values of that density), one does not necessarily have that $u$ satisfies the Hamilton-Jacobi equation in the viscosity sense or even in the sense of distributions; this is replaced by an even weaker criterion that essentially says $u$ behaves like the value function on the support of $m$.
		See \cite{cardaliaguet2014mean,cardaliaguet2015second} for a precise definition of weak solutions.
	\end{enumerate}
	It is not necessary to have convexity (or uniqueness of minimizers) in order to prove that minimizers of the potential are equilibria.
	If one takes any minimizer $(\bar{m},\bar{v})$ of $\s{J}$, one can show that it also minimizes the same functional with $F(m)$ and $G(m)$ replaced by linear approximations near $\bar{m}$; the rest of the argument proceeds by duality as above.
	See \cite[Proposition 3.1]{briani2018stable} for details.
	
	In addition to proving the existence of equilibria, the above outline also motivates numerical algorithms to compute solutions, since it implies that a solution is equivalent to a saddle point for the Lagrangian
	\begin{equation*}
		\sr{L}(m,v,u) := \s{J}(m,v) + \s{J}^*(u) + \int_{\Omega}u(0)m_0 - \int_{\Omega} u(T)m(T) + \int_0^T \int_{\Omega} \del{\partial_t u + D_x u \cdot v}m.
	\end{equation*}
	See, for instance, \cite{benamou2015augmented,benamou2017variational}.
	This method hearkens back to Benamou and Brenier's computational approach to the Monge-Kantorovitch problem in optimal transportation, which they reformulate as the control of a continuity equation \cite{benamou2000computational}.

	\subsection{Connection to the theoretical economics literature}
	
	The concept of a potential game goes back a long way in the literature on game theory and economics; see, for instance, the overview given by Monderer and Shapley \cite{monderer1996potential}.
	For finite games, a potential game is one for which a change in any player's cost function is matched precisely by a corresponding change in the potential function.
	To be precise, suppose player $i \in \{1,\ldots,N\}$ choose a strategy $a_i$ from a finite set $S_i$, and for a vector $a = (a_1,\ldots,a_n)$ denote by $f_i(a) = f_i(a_i,a_{-i})$ the cost to player $i$ (the notation $(a_i,a_{-i})$ is standard in game theory for singling out the component $a_i$, with $a_{-i}$ representing all the other components).
	(NB: in the economics literature it is more common to think of maximizing utility instead of minimizing cost; as mathematicians, it seems, we insist on multiplying all functions by $-1$.)
	A Nash equilibrium is a vector $a$ such that
	\begin{equation*}
		f_i(\tilde{a}_i,a_{-i}) \geq f_i(a_i,a_{-i}) \quad \forall \tilde{a}_i, \ \forall i \in \{1,\ldots,N\}.
	\end{equation*}
	The function $P$ is a potential for the game provided
	\begin{equation*}
		f_i(\tilde a_i,a_{-i}) - f_i(a_i,a_{-i}) \equiv P(\tilde a_i,a_{-i}) - P(a_i,a_{-i})
	\end{equation*}	
	for each $i$.
	In this setting, it is immediate that Nash equilibria correspond to local minimizers of $P$.
	Indeed, if $a$ is a minimizer of $P$, then for player $i$ to change strategies from $a_i$ to $\tilde{a}_i$ can only mean an increase in $P$, which corresponds to an increase in $f_i$; since this is true for every $i$, $a$ is an equilibrium.
		
	The concept of a potential game seems to have been first introduced in \cite{rosenthal1973class}, which defined a particular class known as \emph{congestion games}.
	Roughly speaking, a congestion game is one for which each player's utility depends on the total number of players sharing the same spaces (e.g.~road segments) that appear in the player's strategy.
	It can be proved that every finite congestion game has a potential and that any minimizer of the potential is also a Nash equilibrium.
	In \cite{monderer1996potential} the authors show that every finite potential game is isomorphic to a congestion game.
	Heuristically, one might posit that the equivalent of congestion games for mean field games is one in which $f$ and $g$ depend on $m$ through its density, i.e.~$f(m,x) = f(\rho(x),x)$ whenever $\dif m(x) = \rho(x)\dif x$.
	It is a remarkable coincidence that much of the early literature on the PDE system \eqref{mfg system} deals with precisely this case, or with a related case in which $f(m,x) = \int \xi(x-y)f\del{\int \xi(y-z)\dif m(z),y}\dif y$ where $\xi$ is a standard convolution kernel.
	One may even be tempted to conjecture that all potential mean field games are in some sense isomorphic to an appropriately defined ``congestion game;'' however, this is beyond the scope of the present article.
	
	On the other hand, ``congestion'' in mean field games may also refer to the case where the Lagrangian itself depends in a nonlinear way on the density $m$ in such a manner as to further penalize players when they traverse regions of high density at high speeds.
	This has been studied, e.g.~in \cite{achdou2016mean,achdou2016meanII,achdou2018mean,gomes2015short,graber2015weak}; cf.~\cite{cardaliaguet2013geodesics}.
	However, note that in this case, the mean field game is \emph{not} potential, which we will formally prove in Section \ref{sec:whenispotential}.
	Thus the conjecture from the previous paragraph must not be taken too na\"ively, as it is clearly false if interpreted too broadly.
	
	Monderer and Shapley also prove that potential games have the \emph{fictitious play} property \cite{monderer1996fictitious}.
	This means players can progressively ``learn'' a Nash equilibrium through an iterative process in which the next strategy chosen will be the best response to the average of all previous distributions of strategies \cite{brown1951iterative}.
	If every limit point of this sequence is a Nash equilibrium, the game is said to have the fictitious play property.
	Such a property provides additional justification for the concept of Nash equilibrium: in addition to players having rationally consistent strategies, there is a plausible path through which these strategies can gradually evolve.
	The fictitious play property has also been proved for potential mean field games by Cardaliaguet and Hadikhanloo \cite{cardaliaguet2017learning}; see also \cite{briani2018stable}.
	More general, Sandholm has observed that for a potential game, there exists a general criterion for population dynamics that lead to Nash equilibrium, namely the familiar concept of Lyapunov stability \cite{sandholm2001potential}.
	That is, if the potential is a Lyapunov function for a dynamical system on the space of population distributions, then one can deduce that through such dynamics the population will converge to a rest point; if the dynamics are such that rest points are all Nash equilibria, then we know that each equilibrium is stable, hence we expect them to persist.
	To state and prove similar results for mean field games would be considerably more technical, since the space of population distributions in this case would be probability measures on the space of continuous curves.
	We note, however, that particular evolutionary games have already been studied in a mean field setting; see for instance \cite{ambrosio2021spatially} and references therein.
	We will not pursue this direction in the present paper, but it would make for interesting future research.
	
	An analog of finite potential games for a continuum of players has been given by Sandholm \cite{sandholm2001potential}.
	(This does not quite constitute a mean field game because it is not a differential game; however, the spirit is the same, in the sense that one can view it as the limit of $N$-player games as $N \to \infty$.)
	Let us give the simplest version of such a game.
	Assume there are $n$ strategies, labeled $1,\ldots,n$.
	Instead of individual strategies, we will consider \emph{distributions} of strategies, which are vectors in the simplex $\Sigma = \{x \in \bb{R}^n : x_j \geq 0, x_1 + \cdots + x_n = 1\}$.
	For $x \in \Sigma$, the \emph{support} of $x$ is the set of all $j$ such that $x_j > 0$, i.e.~the set of strategies that are being used by someone.
	The cost function for the $j$th strategy is denoted $f_j(x)$.
	We say that $a \in \Sigma$ is a Nash equilibrium provided that, for every $j$ in the support of $a$, we have $f_j(a) \leq f_i(a)$ for all $i$.
	In other words, if there are any players using strategy $j$, it is because $j$ is an optimal strategy to use given the population distribution $a$.
	We say the game has a potential if there is some $F:\bb{R}^n \to \bb{R}$ for which $f = (f_1,\ldots,f_n) = \nabla F$.
	
	In this setting, it is straightforward, though not quite immediate, to see that minimizers of $F$ are also equilibria.
	Again, we have only to consider what would happen if some people changed their current strategy.
	We have the following simple result.
	\begin{proposition} \label{simple result}
		Suppose $f = (f_1,\ldots,f_n) = \nabla F$ for some potential $F$.
		Then the minimizers of $F$ are also equilibria.
	\end{proposition}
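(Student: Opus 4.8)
The plan is to read off the equilibrium condition directly from the first-order variational inequality that any minimizer of $F$ over the convex set $\Sigma$ must satisfy. First I would fix a minimizer $a \in \Sigma$ and exploit the convexity of the simplex: for an arbitrary competitor $b \in \Sigma$ the whole segment $a + t(b-a)$, $t \in [0,1]$, remains in $\Sigma$, so the one-variable function $t \mapsto F\del{a + t(b-a)}$ attains its minimum over $[0,1]$ at $t = 0$. Differentiating at $t = 0^+$ and using $\nabla F = f$ yields
\[
\sum_{j=1}^n f_j(a)\, b_j \;\geq\; \sum_{j=1}^n f_j(a)\, a_j \qquad \text{for every } b \in \Sigma .
\]

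Next I would extract pointwise information from this inequality. Since the left-hand side is a linear functional of $b$, its minimum over $\Sigma$ is attained at a vertex and equals $\min_i f_i(a)$; hence $\min_i f_i(a) \geq \sum_j f_j(a) a_j$. The reverse inequality is automatic, because $\sum_j f_j(a) a_j$ is a convex combination of the numbers $f_i(a)$ and is therefore at least their minimum. Combining the two gives
\[
\sum_{j=1}^n a_j f_j(a) = \min_i f_i(a) .
\]

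Finally, I would convert this equality into the equilibrium condition. Writing $c := \min_i f_i(a)$, the identity rearranges to $\sum_j a_j\del{f_j(a) - c} = 0$, a sum of nonnegative terms (each factor $a_j \geq 0$ and each $f_j(a) - c \geq 0$); hence every term vanishes, which forces $f_j(a) = c$ whenever $a_j > 0$. In other words, for every $j$ in the support of $a$ we have $f_j(a) = c \leq f_i(a)$ for all $i$, which is exactly the definition of a Nash equilibrium given above.

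The only point requiring genuine care — the reason the result is, as noted, ``not quite immediate'' — is the passage from the single global inequality $f(a)\cdot(b-a) \geq 0$ to the family of pointwise conditions on the support of $a$, which rests on the observation that a convex combination equals the minimum of the combined quantities only when every index carrying positive weight attains that minimum. Everything else is elementary convex analysis, using only that $\Sigma$ is convex and that $F$ is differentiable with $\nabla F = f$.
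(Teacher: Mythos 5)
Your proof is correct, but it takes a somewhat different route from the paper's. The paper's argument is a direct pairwise-switching perturbation: for $j$ in the support of $a$ and any $i \neq j$, the point $a + h(e_i - e_j)$ stays in $\Sigma$ for small $h > 0$ (this is exactly where $a_j > 0$ is used), and minimality gives $\nabla F(a)\cdot(e_i - e_j) \geq 0$, i.e.~$f_i(a) \geq f_j(a)$ immediately --- one feasible direction per pair of strategies, with the economic reading that moving players from a used strategy $j$ to an alternative $i$ can only increase the potential. You instead derive the global variational inequality $f(a)\cdot(b-a) \geq 0$ over all of $\Sigma$ and then extract the support condition by a complementary-slackness argument: the minimum of the linear functional $b \mapsto f(a)\cdot b$ over $\Sigma$ equals $\min_i f_i(a)$, its coincidence with the convex combination $\sum_j a_j f_j(a)$ forces $f_j(a) = \min_i f_i(a)$ whenever $a_j > 0$. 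Both proofs rest on the same first-order optimality principle, and both use only directional derivatives within the simplex, consistent with the remark following the paper's proof. What your version buys is the sharper packaged conclusion that $f_j(a)$ is \emph{constant}, equal to $\min_i f_i(a)$, across the support of $a$ (the standard KKT-type statement); what the paper's version buys is brevity and the transparent game-theoretic interpretation --- switching strategies raises the potential --- which is precisely the feature the author emphasizes and later generalizes to the Lagrangian mean field setting in Theorem \ref{thm:minimizers are equilibria}.
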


	\begin{proof}
		Let $a$ minimize $F$, let $j$ be such that $a_j > 0$, and let $i \neq j$.
		Define $e_{i}$ to be the $i$th standard basis vector in $\bb{R}^n$.
		For sufficiently small $h > 0$, $a + he_i - he_j \in \Sigma$ and represents the strategy distribution that results from some players switching from $j$ to $i$. 
		By minimality, $F(a + he_i - he_j) \geq F(a)$ for all $h > 0$.
		We deduce $\nabla F(a) \cdot (e_i - e_j) \geq 0$, which is the same as $f_i(a) - f_j(a) \geq 0$, as desired.
	\end{proof}
	\begin{remark}
		Notice that in the proof of Proposition \ref{simple result}, we do not need the full gradient of $F$, but only its derivative in the simplex, i.e.~$\lim_{h \to 0} \frac{F\del{a + h(b-a)} - F(a)}{h}$ for vectors $a,b \in \Sigma$.
		Cf.~\cite{sandholm2009large}.
		This is useful to keep in mind when we discuss derivatives over the space of measures below.
	\end{remark}
	Proposition \ref{simple result} has an immediate corollary: if $F$ has at least one minimizer, then the game has at least one equilibrium in pure strategies.
	The proof has the virtue of being transparent, connecting the switching of strategies directly to the increase or decrease in the potential.
	The central goal of this article is to provide an equally transparent account of potential mean field (differential) games.
	To do this, we will change point of view from Eulerian (i.e.~using the PDE system \eqref{mfg system}) to Lagrangian, in which we view the equilibrium as a measure over the space of curves \cite{benamou2017variational,bonnans2023lagrangian,cannarsa2018existence,cardaliaguet2016first,cardaliaguet2017learning,fischer2021asymptotic,mazanti2019minimal}.
	This will allow us to generalize the proof of Proposition \ref{simple result} in a direct way, though the proof is more technical.

	\subsection{Lagrangian formulation}
	
	The cost $J$ given in \eqref{cost} is a function of an individual trajectory $\gamma$ and a path of measures $m = m(t)$.
	Nash equilibrium is then determined from an Eulerian point of view: we first derive the optimal velocity (feedback control) at each point $(t,x)$ and then use that velocity field to determine the evolution of $m$.
	To switch to the Lagrangian point of view, we instead consider a probability measure $\eta$ on the space of continuous curves that stay in $\Omega$.
	We then define the cost as
	\begin{equation} \label{J}
		J(\eta,\gamma) = \int_0^T \del{L\del{\gamma(t),\dot{\gamma}(t)} + f\del{m^\eta(t),\gamma(t)}}\dif t + g\del{m^\eta(T),\gamma(T)},
	\end{equation}
	where $m^\eta(t) := e_t \sharp \eta$ is the push-forward of $\eta$ through the evaluation map $e_t(\gamma) := \gamma(t)$.
	Nash equilibrium is then defined by the fact that $\eta$ is concentrated on curves $\gamma$ that are optimal with respect to the cost $J(\cdot,\eta)$.
	
	The Lagrangian point of view has been employed in several works on mean field games, e.g.~\cite{benamou2017variational,cardaliaguet2017learning,cannarsa2018existence,mazanti2019minimal,fischer2021asymptotic,bonnans2023lagrangian,gomes2016extended}.
	It is especially useful as a first approach to studying mean field games with state constraints, for which deriving the PDE solution can be quite technical \cite{cannarsa2019constrained,cannarsa2021mean}.
	Fischer and Silva used the Lagrangian point of view to prove that $N$-player Nash equilibria converge to mean field equilibria as $N \to \infty$, for a first-order game \cite{fischer2021asymptotic}.
	This may be compared with the result of Sandholm for games with finite strategy sets \cite{sandholm2001potential}.
	
	In this article, the Lagrangian point of view will allow us to generalize the simple proof of Proposition \ref{simple result} showing that minimizers of the potential are also Nash equilibria.
	The potential is defined by
	\begin{equation} \label{sJ}
		\s{J}(\eta) = \int_\Gamma \int_0^T L\del{\gamma(t),\dot{\gamma}(t)} \dif t \dif \eta(\gamma) + \int_0^T F\del{m^\eta(t)}\dif t + G\del{m^\eta(T)},
	\end{equation}
	where $\Gamma$ is the space of admissible curves $\gamma$.
	Indeed, from the Lagrangian point of view it is now straightforward to see why $\s{J}$ is a potential.
	Formally, one can see that the first variation of $\s{J}$ with respect to the measure variable $\eta$ is given by $J(\eta,\gamma)$, provided that $f$ and $g$ are the first variation of $F$ and $G$, respectively.
	(See Section \ref{sec:preliminaries} for precise definitions.)
	The intuition that minimizers are equilibria is that, if one starts with a particular $\gamma$ in the support of $\eta$ and exchanges it with a different curve $\tilde{\gamma}$, the cost can only increase.
	To make this rigorous, we will have to exchange \emph{neighborhoods} and not only individual trajectories; however, this adds only some minor technical details to an otherwise straightforward argument.
	
	The rest of this article is organized as follows.
	In Section \ref{sec:minimizers are eq}, we prove that minimizers of \eqref{sJ} are Nash equilibria for the game given by \eqref{J}.
	To do this, we start by giving some technical definitions, including derivatives on the space of probability measures.
	The main result is proved following the same form as for Proposition \ref{simple result}, while keeping in mind we have a continuous (and indeed infinite dimensional) set of controls.
	After proving the main result, we briefly address two other questions of interest: whether the Lagrangian formulation is equivalent to the original Eulerian formulation, and whether we can identify which mean field games have a potential.
	Then in Section \ref{sec:selection}, we make some remarks on the selection problem, especially as it pertains to potential mean field games.
	We will draw a strong connection between selecting mean field equilibria and solving nonlinear transport equations for which characteristics may cross.
	We will see that the concept of entropy solutions from the theory of nonlinear balance laws can be plausibly used to resolve the selection problem, though not without difficulty.

	\section{Minimizers of a potential are equilibria} \label{sec:minimizers are eq}
	
	\subsection{Preliminaries} \label{sec:preliminaries}
	
	For a separable metric space $X$, denote by $\sr{P}(X)$ the set of all Borel probability measures on $X$.
	The support of a measure $m \in \sr{P}(X)$, denoted $\operatorname{supp} m$, is defined to be the set of all $x \in X$ such that $m\del{B_\varepsilon(x)} > 0$ for every $\varepsilon > 0$, where $B_\varepsilon(x)$ is the ball of radius $\varepsilon$ around $x$ in $X$.
	If $Y$ is another separable metric space and $f:X \to Y$, then for $m \in \sr{P}(X)$ we define the push-forward $f \sharp m \in \sr{P}(Y)$ by $f \sharp m(B) = m\del{f^{-1}(B)}$.
	If $d$ is the metric on $X$ and $p \geq 1$, we denote by $\sr{P}_p(X)$ the set of all $m \in \sr{P}(X)$ such that
	\begin{equation*}
		\int_X d(x,x_0)^p\dif m(x) < \infty
	\end{equation*}
	for some (hence for all) fixed $x_0 \in X$.
	Note that if $X$ is compact, then $\sr{P}_p(X) = \sr{P}(X)$.
	The Wasserstein metric on $\sr{P}_p(X)$ is given by
	\begin{equation}
		W_p(\mu,\nu)^p = \inf \cbr{\int_{X \times X} d(x,y)^p\dif \pi(x,y) : \pi \in \Pi(\mu,\nu)}
	\end{equation}
	where $\Pi(\mu,\nu)$ is the set of all couplings between $\mu$ and $\nu$, i.e.~the set of all $\pi \in \sr{P}(X \times X)$ such that $\pi(B \times X) = \mu(B)$ and $\pi(X \times B) = \nu(B)$ for all Borel sets $B \subset X$.
	It is well-known that an optimal coupling exists, i.e.~the infimum is a minimum \cite[Theorem 4.1]{villani2008optimal}.
	
	For a function $F:\sr{P}_p(\Omega) \to \bb{R}$, we say that $f(m,x)$ is a \emph{linear derivative} of $F$ at $m \in \sr{P}_p(\Omega)$ provided that
	\begin{equation}
		\lim_{t \to 0} \frac{F\del{m + t(\tilde m - m)} - F(m)}{t} = \int_\Omega f(m,x)\dif (\tilde m - m)(x) \quad \forall \tilde m \in \sr{P}_p(\Omega).
	\end{equation}
	In other words, $f(m,x)$ is essentially the first variation of $F$ at $m$, where the variation is taken only within the space of probability measures.
	Observe that $f(m,x)$ is not uniquely defined; if $\tilde f(m,x)$ is another linear derivative, then by considering $\tilde m = \delta_x$ we get
	\begin{equation}
		f(m,x) - \tilde f(m,x) = \int_{\Omega} \del{f(m,\xi) - \tilde f(m,\xi)}\dif m(\xi)
	\end{equation}
	and thus $f(m,\cdot)$ and $\tilde f(m,\cdot)$ differ by a constant, depending on $m$.
	This non-uniqueness will not be of any real consequence in our study.
	
	\subsection{Assumptions} \label{sec:lagrangian}
	
	Let us the consider a Lagrangian formulation of a certain class of mean field games.
	We fix our spatial domain $\Omega$ to be either $\bb{R}^d$, $\bb{T}^d$, or a compact domain that has a $\s{C}^2$ boundary.
	In the last case, we denote by $d_\Omega(x) = \inf\cbr{\abs{x-y} : y \in \Omega}$ the distance from $x$ to $\Omega$, and $b_\Omega(x) = d_\Omega(x) - d_{\Omega^c}(x)$ is called the oriented distance function.
	By assumption on the regularity of the boundary, there exists a constant $\rho_0 > 0$ such that, on the collar domain $\cbr{x: d_{\partial\Omega}(x) < \rho_0}$, $b_\Omega$ is twice continuously differentiable with bounded derivatives of first and second order.

	The space $\Gamma$ will be defined as the set of all absolutely continuous curves $\gamma:[0,T] \to \Omega$ such that $\int_0^T \abs{\dot{\gamma}(t)}^2 \dif t < \infty$; this is the set of all admissible curves.
	We endow $\Gamma$ with the topology of uniform convergence, i.e.~ it is a subspace of the Banach space $C\del{[0,T];\Omega}$.
	For a given initial condition $x \in \Omega$ we set $\Gamma_x = \{\gamma \in \Gamma : \gamma(0) = x\}$.
	For a given time $t \in [0,T]$ we define the evaluation map $e_t:\Gamma \to \Omega$ by $e_t(\gamma) = \gamma(t)$.
	
	If $\eta \in \sr{P}(\Gamma)$, then for $t \in [0,T]$ we define $m^\eta(t) \in \sr{P}(\Omega)$ by $m^\eta(t) = e_t\sharp \eta$.
	Fix an initial measure $m_0 \in \sr{P}(\Omega)$.
	Define $\sr{P}_{m_0}(\Gamma)$ to be the set of all $\eta \in \sr{P}(\Gamma)$ such that $e_0 \sharp \eta = m_0$.
	For any $\gamma \in \Gamma$ and $\eta \in \sr{P}_{m_0}(\Gamma)$, define $J(\eta,\gamma)$ as in \eqref{J}, which we restate here:
	\begin{equation}
		J(\eta,\gamma) = \int_0^T \del{L\del{\gamma(t),\dot{\gamma}(t)} + f\del{m^\eta(t),\gamma(t)}}\dif t + g\del{m^\eta(T),\gamma(T)}.
	\end{equation}	
	We will need some assumptions on the Lagrangian $L$ and the couplings $f$ and $g$.
	\begin{assumption} \label{as:L}
		$L:\Omega \times \bb{R}^d \to \bb{R}$ is continuously differentiable, and it is convex with respect to the second variable.
		There exists a constant $C > 0$ such that
		\begin{equation} \label{eq:quad growth}
			\frac{1}{C}\abs{v}^2 -C \leq L(x,v) \leq C\abs{v}^2 + C \quad \forall v \in \bb{R}^d, x \in \Omega.
		\end{equation}
	\end{assumption}
	\begin{assumption} \label{as:fg}
		$f$ and $g$ are continuous real-valued functions on $\sr{P}_2(\Omega) \times \Omega$.
		Moreover, there exist functions $F$ and $G$ on $\sr{P}(\Omega)$ such that $f$ and $g$ are linear derivatives of $F$ and $G$, respectively, and $F$ and $G$ are bounded below.
	\end{assumption}
	
	We will define the set of best responses $\Gamma_x(\eta)$ to be the set of all $\gamma^* \in \Gamma_x$ such that
	\begin{equation}
		J(\gamma^*,\eta) \leq J(\eta,\gamma) \quad \forall \gamma \in \Gamma_x.
	\end{equation}
	We say that $\eta \in \sr{P}_{m_0}(\Gamma)$ is a \emph{Nash equilibrium} with respect to $J$ provided that
	\begin{equation} \label{Nash equilibrium}
		\operatorname{supp} \eta \subset \bigcup_{x \in \Omega} \Gamma_x(\eta).
	\end{equation}
	In other words, for $\eta$-a.e.~$\gamma^*$, $\gamma^*$ is a best response to $\eta$ over all curves starting at $x = \gamma^*(0)$.
	
	The potential is given by Equation \eqref{potential}, which we restate here:
	\begin{equation}
		\s{J}(\eta) = \int_\Gamma \int_0^T L\del{\gamma(t),\dot{\gamma}(t)} \dif t \dif \eta(\gamma) + \int_0^T F\del{m^\eta(t)}\dif t + G\del{m^\eta(T)}.
	\end{equation}
	Our goal is to show that minimizers of $\s{J}$ must be Nash equilibria, i.e.~they must satisfy \eqref{Nash equilibrium}.

	It follows from Assumptions \ref{as:L} and \ref{as:fg} that any minimizer $\eta$ of $\s{J}$ in $\sr{P}_{m_0}(\Gamma)$ must satisfy an estimate of the form
	\begin{equation} \label{eta moment}
		\int_{\Gamma} \int_0^T \abs{\dot{\gamma}(t)}^2 \dif t \dif \eta(\gamma) < \infty.
	\end{equation}
	We will denote by $\sr{P}^*_{m_0}(\Gamma)$ the set of all $\eta \in \sr{P}_{m_0}(\Gamma)$ that satisfy \eqref{eta moment}.
	\begin{lemma}
		\label{lem:cty of m(t)}
		Let $\eta \in \sr{P}^*_{m_0}(\Gamma)$ for some $m_0 \in \sr{P}_2(\Omega)$.
		Then $[0,T] \ni t \mapsto m^\eta(t) \in \sr{P}_2(\Omega)$ is continuous.
	\end{lemma}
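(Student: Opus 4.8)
The plan is to prove something slightly stronger than continuity, namely that $t \mapsto m^\eta(t)$ is $\tfrac12$-Hölder continuous, by exhibiting an explicit coupling built directly from $\eta$. Fix $s,t \in [0,T]$ and consider the measure $\pi := (e_s,e_t)\sharp \eta \in \sr{P}(\Omega \times \Omega)$. Its first and second marginals are $e_s \sharp \eta = m^\eta(s)$ and $e_t \sharp \eta = m^\eta(t)$, so $\pi$ is an admissible competitor in the definition of $W_2$. This gives
\[
W_2\del{m^\eta(s),m^\eta(t)}^2 \leq \int_{\Omega\times\Omega} \abs{x-y}^2 \dif\pi(x,y) = \int_\Gamma \abs{\gamma(s)-\gamma(t)}^2 \dif\eta(\gamma).
\]

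Next I would estimate the integrand pointwise. For each admissible curve $\gamma$ (absolutely continuous with $\dot\gamma \in L^2$), the Cauchy--Schwarz inequality yields
\[
\abs{\gamma(s)-\gamma(t)}^2 = \abs*{\int_s^t \dot\gamma(\tau)\dif\tau}^2 \leq \abs{t-s}\int_0^T \abs{\dot\gamma(\tau)}^2 \dif\tau .
\]
Integrating this bound against $\eta$ and invoking the moment estimate \eqref{eta moment}, I obtain
\[
W_2\del{m^\eta(s),m^\eta(t)}^2 \leq \abs{t-s}\int_\Gamma \int_0^T \abs{\dot\gamma(\tau)}^2 \dif\tau \dif\eta(\gamma),
\]
where the double integral is a finite constant depending only on $\eta$. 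Since the right-hand side tends to $0$ as $\abs{t-s}\to 0$, this establishes the claimed continuity.

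It remains to check that $m^\eta(t) \in \sr{P}_2(\Omega)$ for every $t$, so that the statement is well posed; this is automatic when $\Omega$ is compact (in which case $\sr{P}_2(\Omega) = \sr{P}(\Omega)$) and needs a word only for $\Omega = \bb{R}^d$. There, fixing $x_0 \in \Omega$ and using $\abs{\gamma(t)-x_0} \leq \abs{\gamma(0)-x_0} + \abs{\gamma(t)-\gamma(0)}$ together with the pointwise energy bound above, one sees that $\int_\Gamma \abs{\gamma(t)-x_0}^2 \dif\eta(\gamma) < \infty$ follows from $m_0 \in \sr{P}_2(\Omega)$ and \eqref{eta moment}. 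I do not expect a serious obstacle in this argument; the only points meriting care are verifying that the pushforward coupling $\pi$ carries the correct marginals and that finite second moments are preserved along the flow.
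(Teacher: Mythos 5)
Your proof is correct, and it takes a genuinely different route from the paper's. The paper argues qualitatively and sequentially: it fixes $t_n \to t$, tests $m^\eta(t_n)$ against an arbitrary continuous function of quadratic growth, applies dominated convergence on the balls $B_R = \{\gamma : \max_t |\gamma(t)| < R\}$, and controls the tail $\Gamma \setminus B_R$ using the moment bound \eqref{eta moment} together with the same Cauchy--Schwarz estimate $|\gamma(t)|^2 \leq 2|\gamma(0)|^2 + 2T\int_0^T|\dot\gamma|^2$ that you use. Your argument instead exhibits the explicit coupling $(e_s,e_t)\sharp\eta$ and bounds $W_2\bigl(m^\eta(s),m^\eta(t)\bigr)^2 \leq |t-s|\int_\Gamma\int_0^T|\dot\gamma|^2\,\dif\tau\,\dif\eta$, which yields a quantitative $\tfrac12$-H\"older modulus rather than bare continuity; it also sidesteps the dominated-convergence bookkeeping entirely and treats bounded and unbounded $\Omega$ in one stroke (the paper explicitly flags the unbounded case as the extra difficulty its argument must absorb). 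The only point worth a word in your write-up is that for $\Omega = \bb{T}^d$ the estimate should be phrased with the quotient metric $d\bigl(\gamma(s),\gamma(t)\bigr) \leq \int_s^t |\dot\gamma(\tau)|\,\dif\tau$ in place of the Euclidean difference, but this is cosmetic since the flat metric is a length metric. Your closing verification that $m^\eta(t) \in \sr{P}_2(\Omega)$ (from $m_0 \in \sr{P}_2$ and \eqref{eta moment}) is also needed and correct; the paper absorbs this implicitly in its tail estimate.
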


	\begin{proof}
		The proof is similar to  \cite[Lemma 3.2]{cannarsa2018existence}, but in general we could be on an unbounded domain $\bb{R}^d$, so we must take this into account.
		
		Let $t_n \to t$ in $[0,T]$.
		It is enough to show that $m^\eta(t_n) \to m^\eta(t)$ narrowly.
		That is, let $\varphi(x)$ be a continuous function satisfying quadratic growth, i.e.
		\begin{equation}
			\abs{\varphi(x)} \leq C\del{1+\abs{x}^2} \forall x \in \Omega,
		\end{equation}
		for some constant $C$.
		We must show that $\int_{\Omega} \varphi(x)m^\eta(t_n,\dif x) \to \int_{\Omega} \varphi(x)m^\eta(t,\dif x)$, which is equivalent to showing that $\int_{\Gamma} \varphi\del{\gamma(t_n)}\dif \eta(\gamma) \to \int_{\Gamma} \varphi\del{\gamma(t)}\dif \eta(\gamma)$.
		Since $\varphi$ is continuous, for any continuous path $\gamma \in \Gamma$ we have $\varphi\del{\gamma(t_n)} \to \varphi\del{\gamma(t)}$.
		Let $B_R$ be the set of all $\gamma \in \Gamma$ such that $\max_t \abs{\gamma(t)} < R$.
		On $B_R$, we have that $\abs{\varphi\del{\gamma(t_n)}} \leq \max_{\abs{x} \leq R} \abs{\varphi(x)}$.
		By the dominated convergence theorem, it follows that $\int_{B_R} \varphi\del{\gamma(t_n)}\dif \eta(\gamma) \to \int_{B_R} \varphi\del{\gamma(t)}\dif \eta(\gamma)$.
		To finish the proof, it is enough to show that
		\begin{equation}
			\lim_{R \to \infty} \sup_n \abs{\int_{\Gamma \setminus B_R} \del{\varphi\del{\gamma(t_n)} - \varphi\del{\gamma(t)}}\dif \eta(\gamma)} = 0.
		\end{equation}
		We start by observing that
		\begin{equation}
			\abs{\int_{\Gamma \setminus B_R} \del{\varphi\del{\gamma(t_n)} - \varphi\del{\gamma(t)}}\dif \eta(\gamma)}
			\leq C\int_{\Gamma \setminus B_R} \del{2 + \abs{\gamma(t_n)}^2 + \abs{\gamma(t)}^2}\dif \eta(\gamma).
		\end{equation}
		Using the fundamental theorem of calculus and the Cauchy-Schwarz inequality,
		\begin{equation}
			\abs{\gamma(t)}^2 = \abs{\gamma(0) + \int_0^t \dot\gamma(s)\dif s}^2 \leq 2\abs{\gamma(0)}^2 + 2T\int_0^T \abs{\dot\gamma(s)}^2 \dif s,
		\end{equation}
		and so
		\begin{equation}
			\abs{\int_{\Gamma \setminus B_R} \del{\varphi\del{\gamma(t_n)} - \varphi\del{\gamma(t)}}\dif \eta(\gamma)}
			\leq C\int_{\Gamma \setminus B_R} \del{2 + 4\abs{\gamma(0)}^2 + 4T\int_0^T \abs{\dot\gamma(s)}^2 \dif s}\dif \eta(\gamma).
		\end{equation}
		Observe that $\int_{\Gamma} \abs{\gamma(0)}^2 \dif \eta(\gamma) = \int_{\Omega} \abs{x}^2 \dif m_0(x) < \infty$.
		Since we also have \eqref{eta moment}, it follows that $\gamma \mapsto 2 + 4\abs{\gamma(0)}^2 + 4T\int_0^T \abs{\dot\gamma(s)}^2 \dif s$ is an integrable function with respect to $\eta$.
		Letting $R \to \infty$ and using the dominated convergence theorem, the claim follows.
	\end{proof}

	In light of Assumptions \ref{as:L} and \ref{as:fg} and Lemma \ref{lem:cty of m(t)}, $J(\eta,\gamma)$ is well-defined and finite for any $\gamma \in \Gamma$ and $\eta \in \sr{P}_{m_0}^*(\Gamma)$.

	\subsection{Minimizers are equilibria in the Lagrangian formulation} \label{sec:minimizers are equilibria}
	\begin{theorem} \label{thm:minimizers are equilibria}
		Suppose $\eta$ minimizes $\s{J}$.
		Then it is a Nash equilibrium with respect to $J$.
	\end{theorem}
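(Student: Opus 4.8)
The plan is to argue by contradiction, imitating the switching argument of Proposition~\ref{simple result} but exchanging \emph{neighborhoods} of curves rather than individual curves, since a single trajectory typically carries no $\eta$-mass. Suppose $\eta$ minimizes $\s{J}$ but fails \eqref{Nash equilibrium}. Then there is a curve $\gamma^* \in \operatorname{supp}\eta$ that is not a best response, so I may fix a competitor $\tilde\gamma \in \Gamma_{\gamma^*(0)}$ with $\delta_0 := J(\eta,\gamma^*) - J(\eta,\tilde\gamma) > 0$. The goal is to transfer a little $\eta$-mass off a neighborhood of $\gamma^*$ onto curves imitating $\tilde\gamma$, and to check that this strictly decreases $\s{J}$, contradicting minimality.

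First I would record the relevant semicontinuity of $\gamma \mapsto J(\eta,\gamma)$ for fixed $\eta \in \sr{P}^*_{m_0}(\Gamma)$. By Lemma~\ref{lem:cty of m(t)} the map $t \mapsto m^\eta(t)$ is continuous in $\sr{P}_2(\Omega)$, so the coupling terms $\int_0^T f\del{m^\eta(t),\gamma(t)}\dif t + g\del{m^\eta(T),\gamma(T)}$ depend continuously on $\gamma$ in the uniform topology, while the running cost $\int_0^T L\del{\gamma,\dot\gamma}\dif t$ is lower semicontinuous under uniform convergence by the convexity of $L$ in $v$ together with the quadratic bounds \eqref{eq:quad growth} (curves of large energy automatically have large cost). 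Hence $J(\eta,\cdot)$ is lower semicontinuous on $\Gamma$.

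The neighborhood exchange is where I expect the main obstacle to lie. For $\gamma$ in a small uniform neighborhood $N$ of $\gamma^*$ the initial point $\gamma(0)$ is close to $\gamma^*(0) = \tilde\gamma(0)$, yet any admissible replacement must start exactly at $\gamma(0)$. I would therefore set $R(\gamma)(t) := \tilde\gamma(t) + (1 - t/T)\del{\gamma(0) - \tilde\gamma(0)}$, the curve $\tilde\gamma$ corrected by the affine shift sending its initial point to $\gamma(0)$. This map depends continuously, hence measurably, on $\gamma$; it preserves initial points, $e_0\del{R(\gamma)} = \gamma(0)$; and $R(\gamma) \to \tilde\gamma$ in $H^1$ as $\gamma(0) \to \gamma^*(0)$, so the quadratic growth of $L$ and continuity of $f,g$ give $J(\eta,R(\gamma)) \to J(\eta,\tilde\gamma)$. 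Shrinking $N$ and using the lower semicontinuity of $J(\eta,\cdot)$ at $\gamma^*$, I can secure a uniform gap $J(\eta,R(\gamma)) \le J(\eta,\gamma) - \delta_0/2$ for all $\gamma \in N$, while $\eta(N) > 0$ because $\gamma^* \in \operatorname{supp}\eta$.

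Finally I would form the competitor $\nu := \eta|_{\Gamma\setminus N} + R\sharp\del{\eta|_N}$, which lies in $\sr{P}^*_{m_0}(\Gamma)$ since $R$ preserves initial points and the shifted curves keep finite energy, and test minimality along $\eta_s := (1-s)\eta + s\nu$. The constraint $e_0\sharp\eta_s = m_0$ is preserved, and since $f,g$ are the linear derivatives of $F,G$, a direct computation of the one-sided derivative yields
\begin{equation*}
	\frac{d}{ds}\Big|_{s=0^+}\s{J}(\eta_s) = \int_\Gamma J(\eta,\gamma)\,\dif(\nu - \eta)(\gamma) = \int_N \del{J(\eta,R(\gamma)) - J(\eta,\gamma)}\dif\eta(\gamma) \le -\tfrac{\delta_0}{2}\,\eta(N) < 0,
\end{equation*}
so $\s{J}(\eta_s) < \s{J}(\eta)$ for small $s > 0$, the desired contradiction. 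The hypotheses do real work in this last computation: one passes to the limit inside the integrals defining $F$ and $G$ via the integral form of the linear derivative, using the convergence $m^{\eta_s}(t) \to m^\eta(t)$ in $\sr{P}_2(\Omega)$ uniformly in $t$ and dominated convergence justified by \eqref{eta moment} and \eqref{eq:quad growth}. Thus the two delicate points are the uniform-gap construction of $R$ and the interchange of limit and integral in the first variation; the remainder follows the template of Proposition~\ref{simple result}.
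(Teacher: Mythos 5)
Your core argument is sound for the translation-invariant cases $\Omega = \bb{R}^d$ or $\bb{T}^d$, and it is a legitimate variant of the paper's proof rather than a copy of it. The paper performs the full swap $\tilde\eta_\varepsilon = S_\varepsilon\sharp\eta$ (rigid translation $\gamma \mapsto \gamma(0)-x+\gamma'$ on $B_\varepsilon$), writes the exact identity \eqref{eq:sJ - sJ} via the linear-derivative fundamental theorem of calculus, then divides by $\eta(B_\varepsilon)$ and sends $\varepsilon \to 0$, handling the kinetic term at $\gamma^*$ through the $\liminf$ estimate \eqref{epto03}. You instead fix the neighborhood, transfer only a fraction $s$ of its mass, and differentiate at $s=0^+$; the Tonelli lower semicontinuity you invoke enters as a pointwise uniform gap $J(\eta,R(\gamma)) \le J(\eta,\gamma)-\delta_0/2$ rather than through the paper's sublevel-set argument. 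Both routes rest on the same two pillars (the linear-derivative identity and lower semicontinuity of the kinetic energy under uniform convergence), and your mass-fraction parametrization is, if anything, slightly cleaner because the kinetic term is linear in $\eta$, so its $s$-derivative is exact. A minor remark in your favor: you do not even need the one-sided derivative to exist; the fundamental-theorem identity for $F$ and $G$ bounds the difference quotient from above by a strictly negative quantity for small $s$, which already yields the contradiction.

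The genuine gap is the state-constrained case. The standing assumptions of Section \ref{sec:lagrangian} allow $\Omega$ to be a compact domain with $\s{C}^2$ boundary, and $\Gamma$ consists of curves taking values in $\Omega$. Your replacement $R(\gamma)(t)=\tilde\gamma(t)+(1-t/T)\del{\gamma(0)-\tilde\gamma(0)}$ need not stay inside $\Omega$: if $\tilde\gamma$ touches or runs along $\partial\Omega$, the added shift pushes $R(\gamma)$ outside, so $R\sharp\del{\eta|_N}$ is not supported in $\Gamma$ and your competitor $\nu$ is not admissible. This is not a minor technicality: the paper devotes the entire second half of its proof to exactly this issue, replacing the shifted curve by its projection onto $\Omega$ via the oriented distance function $b_\Omega$, and then proving — with the estimates of \cite{cannarsa2018existence}, the velocity bound \eqref{leq Cgammadot}, and the decomposition of $[0,T]$ into the sets $I_0$, $I_\delta'$, $I_\delta$, $J_M$ — that the kinetic cost of the projected curves still converges to $\int_0^T L\del{\gamma'(t),\dot{\gamma}'(t)}\dif t$. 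The projection distorts velocities, and controlling that distortion is precisely where the $\s{C}^2$ regularity of $\partial\Omega$ is used. As written, your proof establishes the theorem only for $\bb{R}^d$ and $\bb{T}^d$; to cover the remaining case you would need to compose $R$ with this projection and redo your uniform-gap estimate for the projected curves.
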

	The proof of Theorem \ref{thm:minimizers are equilibria} has the same spirit as that which is found in \cite{sandholm2001potential}.
	In this reference, the objective functional depends on distributions over a finite control set, and so the problem is finite dimensional.
	Here the problem is infinite dimensional, because controls are chosen from an infinite (indeed infinite dimensional) set of curves.

	\begin{proof}
		We will first prove the theorem in the case where $\Omega$ is translation invariant, e.g.~when $\Omega$ is $\bb{R}^d$ or $\bb{T}^d$.
		
		The economic principle at work is that, because $\s{J}$ is a primitive of $J$, whenever two players currently in an equilibrium exchange strategies, this causes $\s{J}$ to increase.
		To make this precise, let $\eta$ minimize $\s{J}$ and take $\gamma^* \in \operatorname{supp} \eta$.
		Let $\gamma' \in \Gamma_{x}$ where $x =\gamma^*(0)$.
		We want to show that $J(\gamma^*,\eta) \leq J(\gamma',\eta)$.
		For $\varepsilon > 0$, let $B_\varepsilon = B_\varepsilon(\gamma^*)$ be the set of all $\gamma \in \Gamma$ such that
		\begin{equation}
			\sup_{t \in [0,T]} \abs{\gamma(t) - \gamma^*(t)} < \varepsilon,
		\end{equation}
		i.e.~the ball of radius $\varepsilon$ around $\gamma^*$ in the uniform norm topology.
		Note that $\eta(B_\varepsilon) > 0$.
		Define $S_\varepsilon:\Gamma \to \Gamma$ by
		\begin{equation}
			S_\varepsilon(\gamma) = \begin{cases}
				\gamma(0) - x + \gamma', &\text{if}~\gamma \in B_\varepsilon,\\
				\gamma, &\text{if}~\gamma \notin B_\varepsilon.
			\end{cases}
		\end{equation}		
		Now define
		\begin{equation}
			\tilde{\eta}_\varepsilon = S_\varepsilon \sharp \eta.
		\end{equation}
		Notice that $\tilde{\eta}_\varepsilon \in \sr{P}(\Gamma)$, and to check that $e_0 \sharp \tilde{\eta}_\varepsilon = e_0 \sharp \eta = m_0$ it suffices to observe that $S(\gamma)(0) = \gamma(0)$ for every $\gamma \in \Gamma$.
		A straightforward calculation shows that
		\begin{multline} \label{eq:sJ - sJ}
			\s{J}\del{\tilde{\eta}_\varepsilon} - \s{J}(\eta)
			= \int_{B_\varepsilon} \int_0^T \del{L\del{\gamma(0) - x + \gamma'(t),\dot{\gamma}'(t)} - L\del{\gamma(t),\dot{\gamma}(t)}}\dif t \dif \eta(\gamma)\\
			+ \int_0^T \int_0^1 \int_{B_\varepsilon} \del{f\del{m_\lambda^\eta(t),\gamma(0) - x + \gamma'(t)} - f\del{m_\lambda^\eta(t),\gamma(t)}}\dif \eta(\gamma)\dif \lambda \dif t\\
			+ \int_0^1 \int_{B_\varepsilon} \del{g\del{m_\lambda^\eta(T),\gamma(0) - x + \gamma'(T)} - g\del{m_\lambda^\eta(T),\gamma(T)}}\dif \eta(\gamma)\dif \lambda,\\
			m_\lambda^\eta(t) := m^\eta(t) + \lambda\del{m^{\tilde{\eta}_\varepsilon}(t) - m^\eta(t)},
		\end{multline} 
		where we have used the fact that $f$ and $g$ are continuous linear derivatives of $F$ and $G$, respectively.
		It is not hard to see that $m^{\tilde{\eta}_\varepsilon}(t) \to m^\eta(t)$ in $\sr{P}_2(\Omega)$.
		Indeed, for any continuous function $\phi$ with quadratic growth, we have
		\begin{multline}
			\int_{\Omega} \phi(x)m^{\tilde{\eta}_\varepsilon}(t,\dif x)
			= \int_{\Gamma \setminus B_\varepsilon} \phi\del{\gamma(t)}\dif \eta(\gamma) + \int_{B_\varepsilon} \phi\del{S_\varepsilon(\gamma)(t)}\dif \eta(\gamma)\\
			\to \int_{\Gamma} \phi\del{\gamma(t)}\dif \eta(\gamma)
			= \int_{\Omega} \phi(x)m^{\eta}(t,\dif x),
		\end{multline}
		since $S_\varepsilon(\gamma)$ is uniformly close to $\gamma'$ in the set $B_\varepsilon$.
		Moreover, we have a uniform bound on $m^{\tilde{\eta}_\varepsilon}(t)$ in $\sr{P}_2(\Omega)$, i.e.~there is a bounded set containing $m^{\tilde{\eta}_\varepsilon}(t)$ for all $\varepsilon$ small and for all $t \in [0,T]$, since
		\begin{equation}
			\int_{\Omega} \abs{x}^2 m^{\tilde{\eta}_\varepsilon}(t,\dif x)
			= \int_{\Gamma \setminus B_\varepsilon} \abs{\gamma(t)}^2\dif \eta(\gamma) + \int_{B_\varepsilon} \abs{S_\varepsilon(\gamma)(t)}^2\dif \eta(\gamma)
			\leq \int_{\Gamma \setminus B_\varepsilon} \abs{\gamma(t)}^2\dif \eta(\gamma)
			+ 2\eta(B_\varepsilon)\del{\varepsilon^2 + \abs{\gamma'(t)}^2}.
		\end{equation}
		
		Now the left-hand side of \eqref{eq:sJ - sJ} is non-negative, because $\eta$ is a minimizer.
		We will divide both sides by $\eta\del{B_\varepsilon}$ and let $\varepsilon \to 0$.
		From the continuity of $f$ and $g$, we deduce that
		\begin{multline} \label{epto01}
			\frac{1}{\eta\del{B_\varepsilon}}\int_0^T \int_0^1 \int_{B_\varepsilon} \del{f\del{m_\lambda^\eta(t),\gamma(0) - x + \gamma'(t)} - f\del{m_\lambda^\eta(t),\gamma(t)}}\dif \eta(\gamma)\dif \lambda \dif t\\
			+ \frac{1}{\eta\del{B_\varepsilon}}\int_0^1 \int_{B_\varepsilon} \del{g\del{m_\lambda^\eta(T),\gamma(0) - x + \gamma'(T)} - g\del{m_\lambda^\eta(T),\gamma(T)}}\dif \eta(\gamma)\dif \lambda\\
			\to \int_0^T \del{f\del{m^\eta(t),\gamma'(t)} - f\del{m^\eta(t),\gamma^*(t)}} \dif t
			+ \del{g\del{m^\eta(T),\gamma'(T)} - g\del{m^\eta(T),\gamma^*(T)}}.
		\end{multline}
		Similarly, by the continuity of $L$ in the first variable, we get
		\begin{equation} \label{epto02}
			\frac{1}{\eta\del{B_\varepsilon}}\int_{B_\varepsilon} \int_0^T L\del{\gamma(0) - x + \gamma'(t),\dot{\gamma}'(t)}\dif t \dif \eta(\gamma)
			\to \int_0^T L\del{\gamma'(t),\dot{\gamma}'(t)}\dif t.
		\end{equation}
		For the remaining term, we want to show that 
		\begin{equation}
			\label{epto03}
			\int_0^T L\del{\gamma^*(t),\dot{\gamma^*}(t)}\dif t
			\leq \liminf_{\varepsilon \to 0}
			\frac{1}{\eta\del{B_\varepsilon}}\int_{B_\varepsilon} \int_0^T L\del{\gamma(t),\dot{\gamma}(t)}\dif t \dif \eta(\gamma).
		\end{equation}
		Define
		\begin{equation}
			\Gamma^K = \{\gamma \in \Gamma : \int_0^T L\del{\gamma(t),\dot{\gamma}(t)}\dif t \leq K\}.
		\end{equation}
		By Assumption \ref{as:L}, there is a constant $C_K$ such that $\int_0^T \abs{\dot{\gamma}(t)}^2 \dif t \leq C_K$ for all $\gamma \in \Gamma^K$.
		Thanks to the convexity of $L(x,v)$ with respect to $v$, it follows from standard arguments (cf.~\cite[Theorem 1 from Section 8.2]{evans10}) that $\gamma \mapsto \int_0^T L\del{\gamma(t),\dot{\gamma}(t)}\dif t$ is lower semi-continuous on $\Gamma^K$.
		We deduce that, for any $K < \int_0^T L\del{\gamma^*(t),\dot{\gamma}^*(t)}\dif t$, there is an $\varepsilon > 0$ small enough such that $\Gamma^K \cap B_\varepsilon = \emptyset$, and therefore
		\begin{equation}
			\liminf_{\varepsilon \to 0}
			\frac{1}{\eta\del{B_\varepsilon}}\int_{B_\varepsilon} \int_0^T L\del{\gamma(t),\dot{\gamma}(t)}\dif t \dif \eta(\gamma) \geq K.
		\end{equation}
		Letting $K \uparrow \int_0^T L\del{\gamma^*(t),\dot{\gamma}^*(t)}\dif t$ we deduce \eqref{epto03}.
		Combining \eqref{epto01}, \eqref{epto02}, and \eqref{epto03}, we see from \eqref{eq:sJ - sJ} that
		\begin{multline}
			\int_0^T L\del{\gamma^*(t),\dot{\gamma^*}(t)}\dif t
			+ \int_0^T  f\del{m^\eta(t),\gamma^*(t)}\dif t
			+ g\del{m^\eta(T),\gamma^*(T)}\\
			\leq \int_0^T L\del{\gamma'(t),\dot{\gamma}'(t)}\dif t
			+
			\int_0^T f\del{m^\eta(t),\gamma'(t)} \dif t
			+ g\del{m^\eta(T),\gamma'(T)},
		\end{multline}
		i.e.~$J(\gamma^*,\eta) \leq J(\gamma',\eta)$, as desired.
		
		Now we deal with the case where there are state constraints, i.e.~when $\Omega$ is not translation invariant.
		Here we use some ideas from \cite{cannarsa2018existence}.
		In this case the curve $S_\varepsilon(\gamma)$ has to be constructed in such a way as to not leave $\Omega$.
		Namely, if $\gamma \in B_\varepsilon$, we take the projection of $\gamma(0) - x + \gamma'$ onto $\Omega$.
		We recall that $b_\Omega$ is $C^2_b$ smooth in a neighborhood of radius $\rho_0 > 0$ around $\partial\Omega$; we may assume that $\varepsilon < \rho_0$.
		Thus
		\begin{equation}
			S_\varepsilon(\gamma)(t) = \begin{cases}
				\gamma(0) - x + \gamma'(t) - d_\Omega\del{\gamma(0) - x + \gamma'(t)}Db_\Omega\del{\gamma(0) - x + \gamma'(t)}, &\text{if}~\gamma \in B_\varepsilon,\\
				\gamma(t), &\text{if}~\gamma \notin B_\varepsilon
			\end{cases}
		\end{equation}
		where the first formula reduces to $\gamma(0) - x + \gamma'(t)$ if it is already in $\Omega$.
		Note that we still have $\abs{S_\varepsilon(\gamma)(t) - \gamma'(t)} \leq \varepsilon$ for every $\gamma \in B_\varepsilon$.
		Therefore, every step in the prove above goes through except one: we need to prove that
		\begin{equation} \label{epto02'}
			\frac{1}{\eta\del{B_\varepsilon}}\int_{B_\varepsilon} \int_0^T L\del{S_\varepsilon(\gamma)(t),\od{}{t}S_\varepsilon(\gamma)(t)}\dif t \dif \eta(\gamma)
			\to \int_0^T L\del{\gamma'(t),\dot{\gamma}'(t)}\dif t
		\end{equation}
		as $\varepsilon \to 0$.
		
		Let $\gamma \in B_\varepsilon$.
		By \cite[Lemma 3.1]{cannarsa2018existence} we have that $S_\varepsilon(\gamma)$ is indeed absolutely continuous with
		\begin{multline}
			\od{}{t}S_\varepsilon(\gamma)(t) = \dot{\gamma}'(t) - \ip{Db_\Omega\del{\gamma(0) - x + \gamma'(t)}}{\dot{\gamma}'(t)}Db_\Omega\del{\gamma(0) - x + \gamma'(t)}{\bf 1}_{\Omega^c}\del{\gamma(0) - x + \gamma'(t)}\\
			- d_\Omega\del{\gamma(0) - x + \gamma'(t)}D^2 b_\Omega\del{\gamma(0) - x + \gamma'(t)}\dot{\gamma}'(t), \quad \text{a.e. } t \in [0,T],
		\end{multline}
		and as in \cite[Proposition 3.1]{cannarsa2018existence} there exists a constant $C > 0$ (independent of both $\varepsilon$ and $t$) such that
		\begin{equation} \label{leq Cgammadot}
			\abs{\od{}{t}S_\varepsilon(\gamma)(t)} \leq C\abs{\dot{\gamma}'(t)} \quad \forall t \in [0,T].
		\end{equation}
		For constants $\delta > 0$ (small) and $M > 0$ (large), define
		\begin{equation}
			\begin{split}
				I_0 &:= \cbr{t \in [0,T] : \gamma'(t) \in \partial\Omega},\\
				I_\delta' &:= \cbr{t \in [0,T] : 0 < d_\Omega\del{\gamma'(t)} < \delta},\\
				I_\delta &:= \cbr{t \in [0,T] : d_\Omega\del{\gamma'(t)} \geq \delta},\\
				J_M &:= \cbr{t \in [0,T]: \abs{\dot{\gamma}'(t)} \leq M}.
			\end{split}
		\end{equation}
		To prove \eqref{epto02'}, we will analyze $L\del{S_\varepsilon(\gamma)(t),\od{}{t}S_\varepsilon(\gamma)(t)}$ on each of these sets.
		If $\varepsilon < \delta$, then it follows that $S_\varepsilon(\gamma)(t) = \gamma(0) - x + \gamma'(t)$ for every $t \in I_\delta$, for every $\gamma \in B_\varepsilon$.
		We deduce that
		\begin{equation} \label{epto02'a}
			\frac{1}{\eta\del{B_\varepsilon}}\int_{B_\varepsilon} \int_{I_\delta} L\del{S_\varepsilon(\gamma)(t),\od{}{t}S_\varepsilon(\gamma)(t)}\dif t \dif \eta(\gamma)
			\to \int_{I_\delta} L\del{\gamma'(t),\dot{\gamma}'(t)}\dif t.
		\end{equation}
		Next, by \cite[Lemma 3.1]{cannarsa2018existence} we have that for a.e.~$t \in I_0$, $\ip{Db_\Omega\del{\gamma(0) - x + \gamma'(t)}}{\dot{\gamma}'(t)} = 0$, and therefore we can write
		\begin{multline}
			\od{}{t}S_\varepsilon(\gamma)(t) = \dot{\gamma}'(t)\\ - \ip{Db_\Omega\del{\gamma(0) - x + \gamma'(t)} - Db_\Omega\del{\gamma'(t)}}{\dot{\gamma}'(t)}Db_\Omega\del{\gamma(0) - x + \gamma'(t)}{\bf 1}_{\Omega^c}\del{\gamma(0) - x + \gamma'(t)}\\
			- d_\Omega\del{\gamma(0) - x + \gamma'(t)}D^2 b_\Omega\del{\gamma(0) - x + \gamma'(t)}\dot{\gamma}'(t), \quad \text{a.e. } t \in I_0,
		\end{multline}
		for any $\gamma \in B_\varepsilon$.
		Let $C_\Omega$ be a bound on $D^2 b_\Omega$ in $\partial \Omega + B_{\rho_0}$.
		Since $\abs{\gamma(0) - x} \leq \varepsilon$, we deduce that
		\begin{equation}
			\abs{\od{}{t}S_\varepsilon(\gamma)(t) - \dot{\gamma}'(t)}
			\leq 2C_\Omega \varepsilon\abs{\dot{\gamma}'(t)}, \quad \text{a.e. } t \in I_0.
		\end{equation}
		Using the fact that $L$ is locally Lipschitz (in fact continuously differentiable), there exists a constant $C = C(\Omega,M)$ such that for every $\varepsilon > 0$ small enough,
		\begin{equation}
			\abs{L\del{S_\varepsilon(\gamma)(t),\od{}{t}S_\varepsilon(\gamma)(t)} - L\del{\gamma'(t),\dot{\gamma}'(t)}}
			\leq C\varepsilon \del{1 + \abs{\dot{\gamma}'(t)}}
			\quad \forall t \in I_0 \cap J_M, \ \forall \gamma \in B_\varepsilon.
		\end{equation}
		We deduce that
		\begin{equation} \label{epto02'b}
			\frac{1}{\eta\del{B_\varepsilon}}\int_{B_\varepsilon} \int_{I_0 \cap J_M} L\del{S_\varepsilon(\gamma)(t),\od{}{t}S_\varepsilon(\gamma)(t)}\dif t \dif \eta(\gamma)
			\to \int_{I_0 \cap J_M} L\del{\gamma'(t),\dot{\gamma}'(t)}\dif t.
		\end{equation}
		Finally, combining the estimate in Assumption \ref{as:L} with \eqref{leq Cgammadot}, we get
		\begin{equation}
			\label{epto02'c}
			\frac{1}{\eta\del{B_\varepsilon}}\int_{B_\varepsilon} \abs{\int_{I_\delta' \cup J_M^c} L\del{S_\varepsilon(\gamma)(t),\od{}{t}S_\varepsilon(\gamma)(t)}\dif t \dif \eta(\gamma)}
			\leq C \int_{I_\delta' \cup J_M^c} \del{1 + \abs{\dot{\gamma}'(t)}^2}\dif t,
		\end{equation}
		for some constant $C$.
		Combining \eqref{epto02'a}, \eqref{epto02'b}, and \eqref{epto02'c}, then letting $\delta \to 0$ and $M \to \infty$, we deduce \eqref{epto02'}.
		This completes the proof.
	\end{proof}

	\subsection{Equivalence of Lagrangian and original formulations}
	
	At least in certain cases, the minimization of \eqref{sJ} over measures $\sr{P}_{m_0}(\Gamma)$ (i.e.~the Lagrangian formulation) can be proved to be equivalent to the optimal control of the continuity equation, namely, the minimization of \eqref{potential} subject to the constraint \eqref{continuity eq} and initial condition $m(0) = m_0$ (i.e.~the Eulerian formulation).
	This is proved in \cite{benamou2017variational} for quadratic Lagrangians $L(x,v) = \frac{1}{2}\abs{v}^2$ on compact domains $\Omega$.
	The key is \cite[Proposition 3.1]{benamou2017variational}, which proves a one-to-one correspondence between solutions to the continuity equation and admissible probability measures $\eta$ on the space of curves $\Gamma$.
	A similar result is found in \cite[Theorem 8.2.1]{ambrosio2008gradient} and seems to allow us to prove the that equivalence between Lagrangian and Eulerian formulations holds also when $\Omega = \bb{R}^d$, at least when $L(x,v) = \frac{1}{2}\abs{v}^2$.
	In this brief article, we will not pursue this issue further, since in the example studied in the following section, the Lagrangian formulation is sufficient.

	\subsection{When can a mean field game be a potential game?} \label{sec:whenispotential}
		
	In this subsection, we will consider whether or not it is possible to find a potential for a given mean field game.
	A result in this direction has been achieved for $N$-player dynamic games in \cite{guo2023towards}.
	All of our results will remain at the formal level; that is, we will not state precise assumptions on the data but assume they are as smooth as necessary for our conclusions to hold.
	Let us consider a more general cost than \eqref{J} given by
	\begin{equation} \label{J1}
		J(\eta,\gamma) = \int_0^T L\del{\mu^\eta(t),\gamma(t),\dot{\gamma}(t)}\dif t + g\del{m^\eta(T),\gamma(T)}
	\end{equation}
	where $\mu^\eta(t)$ is defined as the distribution of states and velocities at time $t$ for curves distributed according to $\eta$.
	To be precise, let us restrict to measures $\eta$ on the space $\Gamma' = C^1\del{[0,T];\Omega}$ of continuously differentiable functions taking values in $\Omega$.
	Then let $\dot{e}_t(\gamma) := \dot{\gamma}(t)$, and set $\mu^\eta(t) = (e_t,\dot{e}_t) \sharp \eta$.
	We define Nash equilibrium in the same way as before.
	At this level of generality, the game is often referred to as a \emph{mean field game of controls}, reflecting the fact that the distribution of controls also plays a role \cite{cardaliaguet2017mfgcontrols}; the terminology ``extended mean field games'' has also been used \cite{gomes2014extended,gomes2016extended}.
	Potential mean field games of controls have been considered in \cite{graber2018variational,bonnans2019schauder,graber2021weak}.
	
	If $J$ defined in \eqref{J1} has a potential in the same way \eqref{J} has potential given by \eqref{sJ}, it must have the form
	\begin{equation}
		\s{J}(\eta) = \int_0^T \sr{L}\del{\mu^\eta(t)}\dif t + G\del{m^\eta(T)}.
	\end{equation}
	Formally,
	\begin{equation}
		\vd{\s{J}}{\eta}(\eta,\gamma) = \int_0^T \vd{\sr{L}}{\mu}\del{\mu^\eta(t),\gamma(t),\dot{\gamma}(t)}\dif t + \vd{G}{m}\del{m^\eta(T),\gamma(T)},
	\end{equation}
	where the notation $\vd{\s{J}}{\eta}$ refers to ``the'' linear derivative of $\s{J}$ with respect to the measure variable (which is unique only up to a constant).
	If $\s{J}$ is a potential, it follows that $L(\mu,x,v)$ must be a linear derivative of $\sr{L}(\mu)$, so we may write
	\begin{equation}
		\vd{\sr{L}}{\mu}\del{\mu,x,v} = L(\mu,x,v).
	\end{equation}
	(In fact, this equality need only hold up to addition of a constant depending only on $\mu$.)
	Taking another derivative, we have
	\begin{equation}
		\vd{}{\mu}\vd{\sr{L}}{\mu}\del{\mu,x,v,\tilde{x},\tilde{v}} = \vd{L}{\mu}(\mu,x,v,\tilde{x},\tilde{v}).
	\end{equation}
	By the usual arguments, if the left-hand side is continuous in all variables, then it is also symmetric in the sense
	\begin{equation}
		\vd{}{\mu}\vd{\sr{L}}{\mu}\del{\mu,x,v,\tilde{x},\tilde{v}} = \vd{}{\mu}\vd{\sr{L}}{\mu}\del{\mu,\tilde{x},\tilde{v},x,v},
	\end{equation}
	which implies
	\begin{equation} \label{symmetry}
		\vd{L}{\mu}(\mu,x,v,\tilde{x},\tilde{v}) = \vd{L}{\mu}(\mu,\tilde{x},\tilde{v},x,v).
	\end{equation}
	This symmetry consideration can be used to provide many examples of Lagrangians $L$ for which there cannot be a potential.
	For example, suppose that $L$ depends only on the distribution of states and not controls, i.e.~$L(\mu,x,v) = \ell(m,x,v)$ where $m = p^1 \sharp \mu$ and $p^1(x,v) = x$.
	We have
	\begin{equation}
		\vd{L}{\mu}(\mu,x,v,\tilde{x},\tilde{v}) = \vd{\ell}{m}(m,x,v,\tilde{x}).
	\end{equation}
	By the symmetry criterion \eqref{symmetry}, it follows that $\vd{\ell}{m}(m,x,v,\tilde{x})$ does not depend on $v$.
	We now deduce that $\ell(m,x,v)$ can be written in the separated form
	\begin{equation}
		\ell(m,x,v) = l(x,v) + f(m,x)
	\end{equation}
	for some functions $l$ and $f$.
	Hence if the particular case we are considering is \emph{not} a mean field game of controls, then it is a potential game if \emph{and only if} the Lagrangian can be written in separated form.
	
	\section{Remarks on the selection problem} \label{sec:selection}
	
	The problem of equilibrium selection is well-known in game theory.
	In the Foreword to Harsanyi and Selten's 1988 text \cite{harsanyi1988general}, Aumann discusses the justification of the equilibrium concept as follows:
	\begin{quotation}
		``One of the oldest rationales for this concept, advanced already by von Neumann and Morgenstern (1944), is that any normative theory that advises players how to play games must pick an equilibrium in each game.
		A theory recommending anything other than an equilibrium would be self-defeating, in the sense that a player who believes that the others are following the theory will sometimes be motivated to deviate from it.''
	\end{quotation}
	However, he then goes on to remark how this justification relates to selection:
	\begin{quotation}
		``In general, a given game may have several equilibria.
		Yet uniqueness is crucial to the foregoing argument.
		Nash equilibrium makes sense only if each player knows which strategies the others are playing; if the equilibrium recommended by the theory is not unique, the players will not have this knowledge.
		Thus it is essential that for each game, the theory selects one unique equilibrium from the set of all Nash equilibrium.
	\end{quotation}
	Selten introduced what is known as ``trembling hand perfect'' equilibrium \cite{selten1975}.
	The idea is to keep only those equilibria that are robust with respect to perturbation of the game by noise.
	In the case where the set of available controls is finite, one can do this simply by assigning a uniformly small probability $\varepsilon$ that any given player might switch to a different strategy, and then see whether, for $\varepsilon$ small enough, the given strategy distribution is still optimal for each player.
	A series of further refinements in this vein have been proposed in \cite{myerson1978refinements,kalai1984persistent,hillas1990definition,harsanyi1995new}; see \cite{van2012refinements} for an overview.
	
	The basic idea of selecting solutions that are robust with respect to perturbations by noise is well-known in the study of PDE.
	Indeed, the concept of an entropy solution to a nonlinear transport equation can be seen as the solution selected by the vanishing viscosity limit (e.g.~\cite{kruvzkov1970first}), which can also be thought of as a vanishing perturbation by Brownian noise.
	In the discussion below, we will see that the same type of selection principle can be used for mean field games under certain conditions.
	Note that the strategy space is infinite, so a uniform perturbation is impossible; instead, a Gaussian perturbation can be seen as a natural choice.
	In the context of potential mean field games, the ``vanishing noise limit'' will be seen to correspond to strategies that minimize the potential.
	
	Throughout this section we will consider only a very particular case of the mean field game studied above.
	This will allow us to develop the main ideas without getting caught up in too many technical details, most of which are still poorly understood.
	The main ideas are as follows.
	Corresponding to the problem of minimizing the potential, there exists a Hamilton-Jacobi equation on the space of probability measures.
	Formally taking the derivative of this equation leads to the so-called \emph{master equation} in mean field games \cite{cardaliaguet2019master,delarue2019master,chassagneux2022probabilistic}, which corresponds to finding a Nash equilibrium.
	When the Nash equilibrium is unique, there is a unique classical solution to the master equation (see \cite{gangbo2022global} for a result that applies directly to our present context).
	When it is not unique, a selection principle must be introduced, which is deeply connected to defining a notion of weak solution to the master equation.
	We will see that the notion of entropy solution is an attractive candidate, but it is not without difficulties.

	
	\subsection{Preliminaries}
	
	Let $B$ be the ball of unit volume around the origin in $\bb{R}^d$, let $\sr{B}$ be the Borel $\sigma$-algebra on $B$, and let $\s{L}^d$ be $d$-dimensional Lebesgue measure; note that $(B,\sr{B},\s{L}^d)$ is a probability space.
	Define $\bb{H} := L^2(B,\sr{B},\s{L}^d)$, and observe that $\bb{H}$ is a separable Hilbert space with inner product given by $\ip{X}{Y} = \bb{E}\sbr{X \cdot Y}$ and norm $\enVert{X} = \ip{X}{X}^{1/2} = \bb{E}[\abs{X}^2]^{1/2}$.
	For $X \in \bb{H}$ we define $\sharp(X) = X\sharp \s{L}^d$, which is the law of the random variable $X$.
	By Brenier's polar factorization result \cite{brenier1991polar}, we have that $\sharp:\bb{H} \to \sr{P}_2(\bb{R}^d)$ is surjective \cite{gangbo2019differentiability}.
	Moreover, we have
	\begin{equation} \label{eq:rv Wasserstein}
		W_2(\mu,\nu)^2 = \min\cbr{\bb{E}\abs{X-Y}^2 : X,Y \in \bb{H}, \sharp(X) = \mu, \ \sharp(Y) = \nu}.
	\end{equation}
	Let $F:\sr{P}_2(\bb{R}^d) \to \bb{R}$.
	We define $\tilde{F}:\bb{H} \to \bb{R}$ by $\tilde{F}(X) = F\del{\sharp(X)}$, and we refer to $\tilde{F}$ as the ``lifted version'' of $F$, because it lifts the domain from $\sr{P}_2(\bb{R}^d)$, which is a curved space, to $\bb{H}$, which is a Hilbert space.
	We say that $F$ has a Wasserstein gradient $D_\mu F:\sr{P}_2(\bb{R}^d) \times \bb{R}^d \to \bb{R}$ provided $\tilde F$ is continuously differentiable with Fr\'echet gradient $D\tilde F:\bb{H} \to \bb{H}$, such that
	\begin{equation}
		D\tilde F(X) = D_\mu F(\sharp(X),X).
	\end{equation}
	The following result, which is \cite[Proposition 5.48]{carmona2017probabilistic} and \cite[Proposition 5.51]{carmona2017probabilistic}, relates the Wasserstein gradient to the linear derivative.
	\begin{proposition} \label{pr:was lin}
		Suppose $F$ has a linear derivative $f(m,x)$ that is differentiable with respect to $x$ such that both $f(m,x)$ and $D_x f(m,x)$ are continuous in both variables.
		Assume $D_x f(m,x)$ has at most linear growth in $x$, and in particular there exists for each $R > 0$ a constant $C(R)$ such that
		\begin{equation}
			\abs{D_x f(m,x)} \leq C(R)\del{1 + \abs{x}} \quad \forall x \in \bb{R}^d, \quad \forall m \in \sr{P}_2(\bb{R}^d) \ \text{s.t.} \ \int \abs{\xi}^2 \dif m(\xi) \leq R.
		\end{equation}
		Then $F$ has a continuous Wasserstein gradient given by
		\begin{equation} \label{eq:was lin}
			D_\mu F(m,x) = D_x f(m,x).
		\end{equation}
		Conversely, if $F$ has a continuous Wasserstein gradient, and if the Fr\'echet gradient $D\tilde F$ is Lipschitz continuous, then $F$ has a linear derivative $f(m,x)$ that is continuous in both variables, and the relation \eqref{eq:was lin} holds.
	\end{proposition}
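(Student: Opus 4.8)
The plan is to prove the two implications separately, each time translating between the flat structure carried by the linear derivative and the Hilbert-space structure of the lift $\tilde F$.

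\emph{From linear derivative to Wasserstein gradient.} Fix $X,Y \in \bb{H}$ and set $m = \sharp(X)$, $m' = \sharp(X+Y)$. Using the defining property of the linear derivative and then the fundamental theorem of calculus in the $x$-variable, I would write
\begin{equation*}
	\tilde F(X+Y) - \tilde F(X) = \int_0^1 \bb{E}\sbr{f\del{m_\lambda,X+Y} - f\del{m_\lambda,X}}\dif\lambda = \int_0^1 \int_0^1 \bb{E}\sbr{D_x f\del{m_\lambda,X+sY}\cdot Y}\dif s\dif\lambda,
\end{equation*}
where $m_\lambda := (1-\lambda)m + \lambda m'$. The candidate Fr\'echet gradient is $D\tilde F(X) = D_x f\del{\sharp(X),X}$, and by Cauchy--Schwarz the remainder is bounded by $\enVert{Y}\sup_{\lambda,s}\enVert{D_x f(m_\lambda,X+sY) - D_x f(m,X)}$. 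The crux of this direction is to show this supremum tends to $0$ as $\enVert{Y}\to 0$. I would argue by contradiction: along a sequence $Y_n \to 0$ for which the supremum stays bounded below, pick near-maximizers $\lambda_n,s_n$ and pass to a subsequence with $Y_n \to 0$ almost everywhere and $\abs{Y_n}\le g$ for a fixed $g \in \bb{H}$; since $W_2\del{\sharp(X+Y_n),m}\le \enVert{Y_n}$ we also get $m_{\lambda_n}\to m$ in $W_2$. Joint continuity of $D_x f$ forces pointwise convergence $D_x f(m_{\lambda_n},X+s_nY_n)\to D_x f(m,X)$, while the linear-growth hypothesis supplies the $\bb{H}$-dominating function $C(R)\del{1+\abs{X}+g}$; dominated convergence then contradicts the lower bound. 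Continuity of $X \mapsto D_x f\del{\sharp(X),X}$, and hence \eqref{eq:was lin}, follows from joint continuity of $D_x f$ and of $X \mapsto \sharp(X)$. The linear-growth assumption is used precisely to obtain this uniform-in-$(\lambda,s)$ passage to the limit.

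\emph{From Wasserstein gradient to linear derivative.} Here the natural candidate is the one-sided Gateaux derivative along Dirac perturbations,
\begin{equation*}
	f(m,x) := \lim_{h\to 0^+}\frac{F\del{(1-h)m + h\delta_x} - F(m)}{h}.
\end{equation*}
To evaluate it I would realize the perturbation inside $\bb{H}$: with $X \sim m$ and $E \subset B$ satisfying $\s{L}^d(E)=h$ and independent of $X$, the variable $X + (x-X)\mathbf{1}_E$ has law $(1-h)m + h\delta_x$, so the numerator equals $\tilde F\del{X+(x-X)\mathbf{1}_E} - \tilde F(X)$. Applying the fundamental theorem of calculus in $\bb{H}$ (legitimate because $D\tilde F$ is Lipschitz, hence $\tilde F \in C^1$), using $D\tilde F(W) = D_\mu F\del{\sharp(W),W}$, and letting $h \to 0$ with the help of the independence of $E$ and the continuity of $D_\mu F$, one arrives at the closed form
\begin{equation*}
	f(m,x) = \int_0^1 \bb{E}\sbr{D_\mu F\del{m,(1-t)X+tx}\cdot(x-X)}\dif t, \qquad X \sim m.
\end{equation*}

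\emph{Identification and verification.} It remains to check that $D_x f = D_\mu F$ and that $f$ is genuinely a linear derivative. Writing the integrand above as $\frac{\dif}{\dif t}\del{t\,u(t)}$ with $u(t) = \bb{E}\sbr{D_\mu F(m,(1-t)X+tx)}$ collapses the integral to $u(1)=D_\mu F(m,x)$, giving \eqref{eq:was lin}; I expect this to be the main obstacle, because the collapse is valid only once one knows that $x \mapsto D_\mu F(m,x)$ is a gradient field (equivalently, has symmetric Jacobian), a structural fact that must be extracted from the law-invariance and Lipschitz continuity of $D\tilde F$. Granting this, the regularity of $f$ (joint continuity and $C^1$ dependence on $x$) follows from the Lipschitz bound on $D\tilde F$, and one verifies the defining integral identity for the linear derivative by repeating the lifting computation with $\delta_x$ replaced by a general target $m'$, obtaining $\frac{\dif}{\dif\lambda}F(m_\lambda) = \int f(m_\lambda,y)\,(m'-m)(\dif y)$ along $m_\lambda=(1-\lambda)m+\lambda m'$ and integrating in $\lambda$. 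Alternatively, once $f$ is shown to be a $C^1$-in-$x$ linear derivative of $F$ satisfying the hypotheses of the first part, the identity \eqref{eq:was lin} is immediate from the first part together with uniqueness of the Wasserstein gradient.
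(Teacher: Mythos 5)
First, a point of comparison: the paper does not actually prove this proposition --- its ``proof'' is a citation to \cite[Propositions 5.48 and 5.51]{carmona2017probabilistic} --- so you are attempting more than the text does. Your first direction (linear derivative $\Rightarrow$ Wasserstein gradient) is correct and essentially reconstructs the cited argument: the double FTC representation, the candidate gradient $D_x f\del{\sharp(X),X}$, and the contradiction argument using a.e.-convergent dominated subsequences, the bound $W_2^2(m_{\lambda},m)\leq \lambda\, W_2^2(m',m)$, and the linear-growth hypothesis as the dominating function, do yield Fr\'echet differentiability of the lift together with continuity of the gradient.

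The converse direction, however, contains a genuine gap, and it is exactly the one you flagged: the collapse of $f(m,x)=\int_0^1 \bb{E}\sbr{D_\mu F\del{m,(1-t)X+tx}\cdot(x-X)}\dif t$ to the identity $D_x f = D_\mu F$ requires that the vector field $V_m := D_\mu F(m,\cdot)$ be conservative (symmetric Jacobian), and nothing in your argument delivers this. It is not a removable technicality. For $m=\delta_\xi$ your formula is the line integral of $V_m$ along the segment from $\xi$ to $x$, and for a non-conservative Lipschitz field this construction provably fails to reproduce the field: with the rotation field $V(y)=(-y_2,y_1)$ and $\xi=0$ one gets $\int_0^1 V(tx)\cdot x \dif t \equiv 0$, whose gradient is not $V$. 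So conservativity of $D_\mu F(m,\cdot)$ is precisely the point at which the law-invariance of $\tilde F$ (the fact that it descends to $\sr{P}_2(\bb{R}^d)$) must be exploited, and your proof never uses law-invariance beyond the definition of the lift. Your fallback --- verify that $f$ is a linear derivative and then invoke the first part plus uniqueness of the Fr\'echet gradient --- does not circumvent this, because applying the first part requires first knowing that $f$ is differentiable in $x$ with jointly continuous $D_x f$, and extracting that from your integral formula is the same problem in disguise. To close the gap one needs a structure theorem for the L-derivative (e.g.~that $D_\mu F(m,\cdot)$ lies in the closure in $L^2(m;\bb{R}^d)$ of $\cbr{D\varphi : \varphi \in C_c^\infty(\bb{R}^d)}$, upgraded to pointwise conservativity under your continuity hypotheses, or a finite-dimensional argument passing through the empirical projections $F_n({\bf x})=F(m^n_{\bf x})$, whose $C^{1,1}$ regularity and symmetry of second derivatives transfer in the limit to symmetry of $DV_m$). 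Two smaller repairs: for an arbitrary $X\in\bb{H}$ there may exist no set $E$ independent of $X$ (e.g.~if $X$ generates the full $\sigma$-algebra), so you must choose a suitable representative of the law $m$, which is legitimate only because $\tilde F$ is law-invariant; and your verification that $f$ is a linear derivative tacitly needs $\int f(m,y)\dif m(y)=0$, which holds but requires an exchangeability/symmetrization argument you do not state.
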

	\begin{proof}
		See \cite[Propositions 5.48 and 5.51]{carmona2017probabilistic}, where the Wasserstein gradient is referred to as the L-derivative.
	\end{proof}
	For a further comparison of different notions of differentiability on the Wasserstein space, see \cite[Chapter 5]{carmona2017probabilistic} and \cite{gangbo2019differentiability}.
	
	We say that a measure $m$ is \emph{empirical} (or \emph{discrete}) if it is the weighted sum of Dirac masses.
	We will often deal with the particular case of an empirical measure with $n$ points equally weighted: for each vector ${\bf x} = (x_1,\ldots,x_n) \in (\bb{R}^{d})^n$, we write $m_{\bf x}^n = \frac{1}{n}\sum_{j=1}^n \delta_{x_j}$.
	We have the following ``isometry'' relation:
	\begin{equation}
		W_2(m_{\bf x}^n,m_{\bf y}^n)^2 = \min\cbr{\frac{1}{n}\sum_{j=1}^n \abs{x_j - y_{\sigma(j)}}^2 : \sigma \in S_n}
	\end{equation}
	where $S_n$ is the permutation group on $\{1,\ldots,n\}$.
	If we can relabel the indices of ${\bf y}$ so that they are in the optimal order with respect to ${\bf x}$, we can then write $W_2(m_{\bf x}^n,m_{\bf y}^n) = n^{-1/2}\abs{{\bf x} - {\bf y}}$.
	
	For a function $F:\sr{P}_2(\bb{R}^d) \to \bb{R}$, we will define $F_n: (\bb{R}^d)^n \to \bb{R}$ by $F_n({\bf x}) = F(m_{\bf x}^n)$.
	It is straightforward to show that
	\begin{equation}\label{eq:finite derivative}
		D_{x_j}F_n({\bf x}) = \frac{1}{n}D_\mu F(m_{\bf x}^n,x_j).
	\end{equation}
	See \cite[Proposition 5.35]{carmona2017probabilistic}.
	
	We say that $F$ is \emph{displacement convex} provided the lifted version $\tilde F(X)$ is convex; in this case we also have that $F_n({\bf x})$ is a convex function on $(\bb{R}^d)^n$.
	We say that a continuous function $f:\sr{P}_2(\bb{R}^d) \times \bb{R}^d \to \bb{R}$ with continuous derivative $D_x f$ is \emph{displacement monotone} provided that for every $X,Y \in \bb{H}$ we have
	\begin{equation}
		\bb{E}\sbr{\del{D_xf(\sharp(X),X) - D_xf(\sharp{Y},Y)}(X-Y)} \geq 0.
	\end{equation}
	If $D_x f = D_\mu F$, then $F$ is displacement convex if and only if $f$ is displacement monotone \cite{gangbo2022mean}.
	Essentially, this follows from the fact that $\tilde F$ is convex if and only if $D\tilde F$ is monotone; cf.~Proposition \ref{pr:was lin}.
	
	Another type of monotonicity, which is more common in mean field game theory, is called the Lasry-Lions (LL) condition \cite{lasry07}.
	We say that $f:\sr{P}_2(\bb{R}^d) \times \bb{R}^d \to \bb{R}$ is LL monotone provided that
	\begin{equation}
		\int_{\bb{R}^d} \del{f(m_1,x)-f(m_2,x)}\dif(m_1-m_2)(x) \geq 0 \quad \forall m_1,m_2 \in \sr{P}_2(\bb{R}^d),
	\end{equation}
	and we say it is \emph{strictly} LL monotone provided that equality implies $f(m_1,x) = f(m_2,x)$ for all $x$.
	See \cite{graber2023monotonicity} for a further discussion of monotonicity in mean field games.
	Note that if $f$ is a linear derivative of $F:\sr{P}_2(\bb{R}^d) \to \bb{R}$, then $f$ is LL monotone if and only if $F$ is convex in the usual sense, i.e.~$F(tm_1 + (1-t)m_2) \leq tF(m_1) + (1-t)F(m_2)$ for all $m_1,m_2 \in \sr{P}_2(\bb{R}^d)$ and $t \in [0,1]$.

	\subsection{A reduced problem}
	Throughout the remainder of this section, let us assume that the running cost $f \equiv 0$ and $L(x,v) = \frac{1}{2}\abs{v}^2$, that is,
	\begin{equation} \label{Jsimple}
		J(\eta,\gamma) = \frac{1}{2}\int_0^T \abs{\dot{\gamma}(t)}^2 \dif t + g\del{m^\eta(T),\gamma(T)}.
	\end{equation}
	The potential reduces to
	\begin{equation} 
		\s{J}(\eta) = \frac{1}{2}\int_\Gamma \int_0^T \abs{\dot{\gamma}(t)}^2 \dif t \dif \eta(\gamma) + G\del{m^\eta(T)}.
	\end{equation}
	We will assume for simplicity that $G$ is has a continuous Wasserstein gradient $D_\mu G(m,x)$ and that the gradient $D\tilde G(X)$ is Lipschitz continuous.
	It follows from Proposition \ref{pr:was lin} that $g(m,x)$ and $D_x g(m,x)$ are continuous and that $D_\mu G = D_x g$.
	We will also assume that the state space is $\Omega = \bb{R}^d$.
	
	To minimize $\s{J}$ defined in \eqref{Jsimple}, it is sufficient to travel in straight lines from the initial measure $m_0$, as the following lemma makes clear.
	For $m_T \in \sr{P}_2(\bb{R}^d)$ we define
	\begin{equation}
		\s{K}(m_T) = \frac{1}{2T}W_2\del{m_0,m_T}^2 + G\del{m_T}.
	\end{equation}
	\begin{lemma} \label{lem:straight lines}
		$\inf\cbr{\s{J}(\eta) : \eta \in \sr{P}_{m_0}(\Gamma)} = \inf\cbr{\s{K}(m_T) : m_T \in \sr{P}_2(\bb{R}^d)}$, i.e.~to minimize $\s{J}(\eta)$ it suffices to consider $\eta$ concentrated on straight lines.
	\end{lemma}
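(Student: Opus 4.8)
The plan is to prove the two inequalities separately, treating $\s{K}$ as the value of a restricted minimization (over $\eta$ concentrated on straight-line curves) and then showing that nothing is lost by this restriction. For the easy direction, $\inf \s{J} \leq \inf \s{K}$, I would take any target measure $m_T \in \sr{P}_2(\bb{R}^d)$, let $\pi \in \Pi(m_0,m_T)$ be an optimal coupling for $W_2(m_0,m_T)^2$ (which exists by the remark after the Wasserstein metric definition, citing \cite[Theorem 4.1]{villani2008optimal}), and build the measure $\eta$ supported on the affine interpolations $\gamma_{x,y}(t) = x + \tfrac{t}{T}(y-x)$. Concretely, if $\Phi:\bb{R}^d \times \bb{R}^d \to \Gamma$ sends $(x,y)$ to $\gamma_{x,y}$, set $\eta = \Phi \sharp \pi$. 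Then $e_0 \sharp \eta = m_0$, so $\eta \in \sr{P}_{m_0}(\Gamma)$, and $e_T \sharp \eta = m_T$. For each such straight line $\dot\gamma_{x,y}(t) \equiv (y-x)/T$, so $\tfrac12\int_0^T |\dot\gamma_{x,y}|^2\dif t = \tfrac{1}{2T}|y-x|^2$, and integrating against $\pi$ gives exactly $\tfrac{1}{2T}W_2(m_0,m_T)^2$ for the kinetic term while the terminal term is $G(m_T)$. Hence $\s{J}(\eta) = \s{K}(m_T)$, and taking the infimum over $m_T$ yields $\inf \s{J} \leq \inf \s{K}$.

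For the reverse inequality $\inf \s{J} \geq \inf \s{K}$, I would take an arbitrary $\eta \in \sr{P}_{m_0}(\Gamma)$ (and may assume $\eta \in \sr{P}^*_{m_0}(\Gamma)$, i.e.\ that the kinetic energy is finite, since otherwise $\s{J}(\eta) = +\infty$ and there is nothing to prove). Set $m_T := m^\eta(T) = e_T \sharp \eta$. The terminal costs match by definition, so it suffices to bound the kinetic energy from below by the transport cost:
\begin{equation} \label{eq:kin-lower}
	\frac12 \int_\Gamma \int_0^T |\dot\gamma(t)|^2 \dif t \dif \eta(\gamma) \geq \frac{1}{2T} W_2(m_0,m_T)^2.
\end{equation}
For each fixed curve $\gamma$, Jensen's inequality (or Cauchy--Schwarz, as already used in the proof of Lemma \ref{lem:cty of m(t)}) gives $|\gamma(T)-\gamma(0)|^2 = |\int_0^T \dot\gamma(t)\dif t|^2 \leq T\int_0^T |\dot\gamma(t)|^2\dif t$. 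Integrating against $\eta$, the left side becomes $\int_\Gamma |\gamma(T)-\gamma(0)|^2 \dif\eta(\gamma)$, which equals $\int |y-x|^2 \dif\pi_\eta(x,y)$ where $\pi_\eta := (e_0,e_T)\sharp \eta$ is a coupling of $m_0$ and $m_T$. Since $\pi_\eta \in \Pi(m_0,m_T)$, this last integral is at least $W_2(m_0,m_T)^2$ by definition of the Wasserstein distance, which establishes \eqref{eq:kin-lower}. Combining the two inequalities gives the claimed equality of infima.

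The argument is essentially routine, so I do not expect a serious obstacle; the only points requiring slight care are measurability and integrability. For the easy direction one must check that $\Phi$ is Borel measurable (straightforward, since affine interpolation is continuous in $(x,y)$ uniformly in $t$) and that the resulting $\eta$ lies in $\sr{P}^*_{m_0}(\Gamma)$, which follows from $m_T \in \sr{P}_2(\bb{R}^d)$ and $m_0 \in \sr{P}_2(\bb{R}^d)$. For the reverse direction, the main subtlety is ensuring $m_T \in \sr{P}_2(\bb{R}^d)$ so that $W_2(m_0,m_T)$ is finite and $\s{K}(m_T)$ is well-defined; this is exactly the content of Lemma \ref{lem:cty of m(t)}, which guarantees $t \mapsto m^\eta(t)$ takes values in $\sr{P}_2(\bb{R}^d)$. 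With that in hand, every integral above is finite and the interchange of integration order (via Tonelli for the nonnegative kinetic integrand) is justified.
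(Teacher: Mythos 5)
Your proposal is correct and follows essentially the same route as the paper: the paper's proof also uses the straight-line interpolation map $(x,y) \mapsto \gamma_{x,y}$ pushed forward through a coupling for one inequality, and Jensen's inequality applied to $(e_0,e_T)\sharp\eta \in \Pi(m_0, m^\eta(T))$ for the other. The only difference is organizational: the paper routes both inequalities through an intermediate functional $\s{K}'$ defined on couplings $\pi$, whereas you collapse the two steps by invoking the optimal transport plan directly, which changes nothing essential.
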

	
	\begin{proof}
		First, for $\pi \in \sr{P}_2(\bb{R}^d \times \bb{R}^d)$ we define
		\begin{equation}
			\s{K}'(\pi) = \frac{1}{2T}\int_{\bb{R}^d \times \bb{R}^d} \abs{x-y}^2 \dif \pi(x,y) + G\del{p^2 \sharp \pi}
		\end{equation}
		where $p^2$ is the projection $p^2(x,y) = y$.
		We claim 
		\begin{equation} \label{eq:J = K'}
			\inf\cbr{\s{J}(\eta) : \eta \in \sr{P}_{m_0}(\Gamma)} = \inf\cbr{\s{K}'(\pi) : \pi \in \sr{P}(\bb{R}^d \times \bb{R}^d) : p^1 \sharp \pi = m_0}
		\end{equation}
		where $p^1(x,y) = x$.
		For any $(x,y) \in \bb{R}^d \times \bb{R}^d$, we set
		\begin{equation}
			\gamma_{x,y}(t) = x + \frac{t}{T}(y-x).
		\end{equation}
		The map $S:\bb{R}^d \times \bb{R}^d \to \Gamma$ given by $S(x,y) = \gamma_{x,y}$ is continuous.
		For any $\pi \in \sr{P}_2(\bb{R}^d \times \bb{R}^d)$ we see that $\s{J}(S \sharp \pi) = \s{K}'(\pi)$, using the fact that $\dot{\gamma}(t) = 1/T$.
		Moreover, if $p^1 \sharp \pi = m_0$, then $S \sharp \pi \in \sr{P}_{m_0}(\Gamma)$ since $e_0 \circ S = p^1$.
		It follows immediately that \eqref{eq:J = K'} holds with $\leq$ in place of $=$.
		To prove the opposite inequality, let $\eta \in \sr{P}_{m_0}(\Gamma)$.
		By Jensen's inequality and the fundamental theorem of calculus,
		\begin{equation}
			\s{J}(\eta) \geq \frac{1}{2T}\int_{\Gamma} \abs{\gamma(T) - \gamma(0)}^2\dif \eta(\gamma) + G\del{m^\eta(T)}
			= \frac{1}{2T}\int_{\Gamma} \abs{y-x}^2\dif \del{(e_0,e_T)\sharp \eta}(x,y) + G\del{e_T \sharp \eta},
		\end{equation}
		i.e.~$\s{J}(\eta) \geq \s{K}'(\pi)$ where $\pi = (e_0,e_T) \sharp \eta$.
		This proves \eqref{eq:J = K'}.
		
		We now argue that
		\begin{equation} \label{eq:K' = K}
			\inf\cbr{\s{K}'(\pi) : \pi \in \sr{P}(\bb{R}^d \times \bb{R}^d) : p^1 \sharp \pi = m_0}
			= \inf\cbr{\s{K}(m_T) : m_T \in \sr{P}_2(\bb{R}^d)}.
		\end{equation}
		If $\pi \in \sr{P}(\bb{R}^d \times \bb{R}^d)$ with $p^1 \sharp \pi = m_0$, then letting $m_T = p^2 \sharp \pi$ yields $\int_{\bb{R}^d \times \bb{R}^d} \abs{x-y}^2 \dif \pi(x,y) \geq W_2(m_0,m_T)^2$ and thus $\s{K}'(\pi) \geq \s{K}(m_T)$.
		Conversely, if $m_T \in \sr{P}_2(\bb{R}^d)$, then we can find an optimal transport plan $\pi$ with $p^1 \sharp \pi = m_0$ and $p^2 \sharp \pi = m_T$ such that $\int_{\bb{R}^d \times \bb{R}^d} \abs{x-y}^2 \dif \pi(x,y) = W_2(m_0,m_T)^2$ and thus $\s{K}'(\pi) = \s{K}(m_T)$.
		Equation \eqref{eq:K' = K} follows, which is what we needed to show.
	\end{proof}
	
	Fix an arbitrary $X \in \bb{H}$ whose law is $m_0$, and define for any $Y \in \bb{H}$ the functional
	\begin{equation} \label{tildeK}
		\tilde{\s{K}}(Y) = \frac{1}{2T}\bb{E}\abs{X-Y}^2 + G\del{\sharp(Y)}.
	\end{equation}
	The proof of the following lemma is found, for example, in the lectures by Lions \cite{lions07}, see \cite[Remark 6.9]{cardaliaguet2010notes}.
	\begin{lemma} \label{lem:lift K}
		$\inf\cbr{\s{K}(m_T) : m_T \in \sr{P}_2(\bb{R}^d)} = \inf\cbr{\tilde{\s{K}}(Y) : Y \in \bb{H}}$.
	\end{lemma}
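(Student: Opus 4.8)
The plan is to prove that the two infima coincide by establishing each inequality separately, with all the nontrivial content lying in realizing an optimal coupling while keeping the prescribed representative $X$ of $m_0$ fixed.

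First, the inequality $\inf_{Y \in \bb{H}} \tilde{\s{K}}(Y) \geq \inf_{m_T} \s{K}(m_T)$ is immediate. For any $Y \in \bb{H}$, the pair $(X,Y)$ is an admissible competitor in \eqref{eq:rv Wasserstein} for the measures $m_0 = \sharp(X)$ and $\sharp(Y)$, so $\bb{E}\abs{X-Y}^2 \geq W_2(m_0,\sharp(Y))^2$. Hence $\tilde{\s{K}}(Y) \geq \frac{1}{2T}W_2(m_0,\sharp(Y))^2 + G\del{\sharp(Y)} = \s{K}\del{\sharp(Y)} \geq \inf_{m_T} \s{K}(m_T)$, and taking the infimum over $Y$ gives the claim. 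This direction uses neither surjectivity of $\sharp$ nor any property of the base space beyond \eqref{eq:rv Wasserstein}.

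For the reverse inequality I would fix an arbitrary $m_T \in \sr{P}_2(\bb{R}^d)$ and produce a single $Y \in \bb{H}$ with $\sharp(Y) = m_T$ and $\bb{E}\abs{X-Y}^2 = W_2(m_0,m_T)^2$; for such a $Y$ one has $\tilde{\s{K}}(Y) = \s{K}(m_T)$, and taking the infimum over $m_T$ then yields $\inf_Y \tilde{\s{K}}(Y) \leq \inf_{m_T} \s{K}(m_T)$. To build $Y$, I would first take an optimal plan $\pi \in \Pi(m_0,m_T)$ achieving $W_2(m_0,m_T)^2$, which exists as recalled in Section~\ref{sec:preliminaries}, and then realize $\pi$ as the joint law of $(X,Y)$ for the \emph{prescribed} $X$. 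Writing the disintegration $\pi(\dif x,\dif y) = m_0(\dif x)\,\pi_x(\dif y)$, the goal becomes the construction of a $\sr{B}$-measurable $Y$ whose conditional law given $X = x$ is $\pi_x$.

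The construction of $Y$, and not the bookkeeping around it, is the main obstacle, because one is \emph{not} free to realize an arbitrary coupling with $X$ held fixed: if, for instance, $X$ is injective (so that $\sigma(X) = \sr{B}$), then any such $Y$ would be a deterministic function of $X$, which forces $\pi$ to be supported on a graph, whereas a general coupling is not. What saves the argument is that $\pi$ is \emph{optimal}: for the quadratic cost $\pi$ is concentrated on the (a.e.\ single-valued) subdifferential of a convex potential, so away from the atoms of $m_0$ the plan is carried by a Borel map $T$ and one may set $Y = T(X)$ there, exactly as in Brenier's theorem \cite{brenier1991polar}. At each atom $a$ of $m_0$ the fiber $X^{-1}(\{a\}) \subset B$ carries positive Lebesgue measure and is therefore itself atomless and standard, supplying precisely the randomness needed to realize the conditional measure $\pi_a$ by a measurable map on that fiber via a quantile/measurable-isomorphism argument; since $X$ is constant there, no independence from $X$ is required. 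Patching the map on the atomless part of $m_0$ with the fiberwise constructions at the atoms produces a single measurable $Y \in \bb{H}$ with $\sharp(Y) = m_T$ and $\bb{E}\abs{X-Y}^2 = \int \abs{x-y}^2\dif\pi = W_2(m_0,m_T)^2$, completing the proof. This realization is the standard fact underlying \eqref{eq:rv Wasserstein} and the polar factorization framework, and the delicate point to get right is exactly the atomless measurable selection at the atoms of $m_0$.
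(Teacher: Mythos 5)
Your first direction and your diagnosis of where the difficulty lies are both correct: the only issue is realizing an optimal coupling as the joint law of $(X,Y)$ with the prescribed representative $X$ held fixed, and your observation that this is impossible for arbitrary couplings when $\sigma(X)$ is the whole $\sigma$-algebra is exactly right. Your route is also genuinely different from the paper's: the paper takes an arbitrary optimal pair $(X',Y')$ from \eqref{eq:rv Wasserstein} and transfers it onto $X$ by writing $X' = X \circ S$ for a measure-preserving bijection $S$, setting $Y = Y' \circ S^{-1}$, whereas you build $Y$ directly over $X$ from the structure of optimal plans (a Brenier-type map off the atoms, fiberwise randomization at the atoms). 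Your atom-handling step is sound: each fiber $X^{-1}(\cbr{a})$ has positive Lebesgue measure, hence is an atomless standard space on which any conditional law $\pi_a$ can be realized.

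The gap is the step ``away from the atoms of $m_0$ the plan is carried by a Borel map.'' In dimension $d \geq 2$ atomlessness is not enough: the optimal plan is concentrated on the subdifferential $\partial\phi$ of a convex function, and $\partial\phi$ is single-valued outside a $(d-1)$-rectifiable set, but an atomless $m_0$ may charge such a set. Concretely, take $d=2$, $m_0$ the uniform measure on the segment $\cbr{0}\times[0,1]$, and $m_T$ the half-half mixture of the uniform measures on $\cbr{-1}\times[0,1]$ and $\cbr{1}\times[0,1]$: the optimal plan splits every point of the segment in two, no transport map attains the cost $W_2(m_0,m_T)^2 = 1$, and if $X$ is chosen to be a Borel isomorphism of $B$ onto the segment carrying $\s{L}^d$ to $m_0$ (allowed, since $X$ is an arbitrary fixed representative), then every $Y \in \bb{H}$ is a.e.\ a function of $X$, so no $Y$ with $\sharp(Y)=m_T$ satisfies $\bb{E}\abs{X-Y}^2 = W_2(m_0,m_T)^2$. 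Thus your plan of producing a single exactly-optimal $Y$ cannot work in general; note that the same example shows the measure-preserving bijection invoked in the paper's own proof need not exist, so the paper's argument shares this defect, and your instinct about the delicate point is sharper than the paper's treatment of it. The lemma itself survives, but in this regime it must be proved by approximation rather than attainment: for atomless source measures the Monge infimum over maps equals the Kantorovich minimum (Ambrosio--Pratelli), so there are maps $T_k$ pushing $m_0$ to $m_T$ with $\int \abs{x - T_k(x)}^2 \dif m_0(x) \to W_2(m_0,m_T)^2$, and $Y_k := T_k(X)$ gives $\tilde{\s{K}}(Y_k) \to \s{K}(m_T)$; combining this on the non-atomic part with your fiberwise construction at the atoms handles a general $m_0 \in \sr{P}_2(\bb{R}^d)$.
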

	\begin{proof}
		For the reader's convenience, we give some details of the proof.
		If $Y \in \bb{H}$, set $m_T = \sharp(Y)$; from \eqref{eq:rv Wasserstein} we get $\tilde{\s{K}}(Y) \geq \s{K}(m_T)$.
		Conversely, let $m_T \in \sr{P}_2(\bb{R}^d)$ be given.
		There exist $X',Y' \in \bb{H}$ such that $\bb{E}\abs{X'-Y'}^2 = W_2(m_0,m_T)$, $\sharp(X') = m_0$ and $\sharp(Y') = m_T$.		
		Since $X'$ and $X$ have the same law, there exists a bijection $S:B \to B$  such that $S$ and $S^{-1}$ are both measure preserving and $X' = X \circ S$.
		Now let $Y = Y' \circ S^{-1}$, so that $\sharp(Y) = \sharp(Y') = m_T$ and $\bb{E}\abs{X-Y}^2 = \bb{E}\abs{X' - Y'}^2 = W_2(m_0,m_T)^2$.
		It follows that $\s{K}(m_T) = \tilde{\s{K}}(Y)$.
		This completes the proof.
	\end{proof}

	Before continuing, we remark that it is not difficult to prove the existence of minimizers (and therefore equilibria) by assuming $G$ is also continuous in a slightly stronger sense, e.g.~$G:\sr{P}_p(\bb{R}^d) \to \bb{R}$ is continuous for some $p < 2$.
	In this case one can appeal to the arguments in \cite[Section 4]{hynd2015value}.
	More directly, suppose $m_{T,n}$ is a minimizing sequence for $\s{K}$.
	By the assumptions on $G$ we deduce that
	\begin{equation}
		G(m_{T,n}) \leq C\del{1 + \del{\int_{\bb{R}^d} \abs{x}^2 \dif m_{T,n}}^{1/2}}
	\end{equation}
	for some constant $C$.
	Using the inequality
	\begin{equation}
		\int_{\bb{R}^d} \abs{x}^2 \dif m_{T,n} \leq 2W_2(m_0,m_{T,n})^2 + 2\int_{\bb{R}^d} \abs{x}^2 \dif m_0,
	\end{equation}
	we deduce that there is a uniform bound on $\int_{\bb{R}^d} \abs{x}^2 \dif m_{T,n}$.
	Thus $\{m_{T,n}\}$ is tight and has uniformly integrable $p$th moments for $p < 2$.
	By Prokhorov's Theorem and \cite[Proposition 7.1.5]{ambrosio2008gradient} we can pass to a subsequence that converges in $\sr{P}_p(\bb{R}^d)$ to some $m_T \in \sr{P}_2(\bb{R}^d)$.
	Using the weak lower-semicontinuity of the $W_2$ metric \cite[Lemma 7.1.4]{ambrosio2008gradient}, we deduce that $m_T$ is a minimizer.
	Let us refer to \cite{gomes2015minimizers,gomes2016infinite} for related results on the existence of minimizers to infinite dimensional optimal control problems.
	
	\subsection{Necessary and sufficient conditions for uniqueness of minimizers and equilibria}

	We are now ready to state an important result relating uniqueness of minimizers to the potential to the uniqueness of a Nash equilibrium.
	The result we will prove is certainly not as general as possible.
	However, it communicates an important point: displacement monotonicity of the coupling is \emph{necessary} as well as \emph{sufficient} for uniqueness of the equilibrium.
	By contrast, Lasry-Lions monotonicity is a sufficient condition for uniqueness of the \emph{value function} for Nash equilibrium, i.e.~the function $u$ in System \eqref{mfg system}.
	See e.g.~\cite{cannarsa2019constrained} for a proof in the case of ``mild'' solutions.
	See also \cite{achdou2020introduction}. 
	On the other hand, by \cite[Proposition 3.5]{graber2023monotonicity}, LL monotonicity does not imply uniqueness of the equilibrium distribution, for the simple reason that mass can split in different directions.
	Intuitively, uniqueness of the cost to players does not imply uniqueness of the paths they take to reach that cost; this requires convexity of the cost $g(m,x)$ with respect to the state variable $x$, which holds if it is displacement monotone.
	
	\begin{theorem} \label{thm:uniqueness}
		In addition to the regularity assumed above, suppose that $g(m,x)$ is convex with respect to the $x$-variable.
		The following are equivalent:
		\begin{enumerate}
			\item $G$ is displacement convex.
			\item $g$ is displacement monotone.
			\item For every $T > 0$, $\s{J}$ has a unique minimizer.
			\item For every $T > 0$, there is a unique Nash equilibrium for the cost $J$ given in \eqref{Jsimple}.
		\end{enumerate}
	\end{theorem}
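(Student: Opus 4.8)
The plan is to push everything onto the Hilbert space $\bb{H}$ and to read off Nash equilibria as critical points of a single functional. By Lemmas \ref{lem:straight lines} and \ref{lem:lift K}, minimizing $\s{J}$ over $\sr{P}_{m_0}(\Gamma)$ is the same as minimizing $\tilde{\s{K}}(Y) = \tfrac{1}{2T}\bb{E}\abs{X-Y}^2 + \tilde{G}(Y)$ over $Y \in \bb{H}$, where $\tilde G = G \circ \sharp$. I would rewrite this, up to the positive factor $1/T$ and an additive constant, as minimizing $W_T(Y) - \ip{X}{Y}$ with $W_T := \tfrac12\enVert{\cdot}^2 + T\tilde G$; since $G$ is bounded below (Assumption \ref{as:fg}), $W_T$ is coercive and norm-continuous, so minimizers exist (as in the remark preceding this subsection). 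The first-order condition for $\tilde{\s{K}}$ is $\Phi_T(Y) := Y + TD\tilde G(Y) = X$. Because $f \equiv 0$, the best response from $x$ to a terminal measure $m_T$ is the straight line to $\argmin_y\{\tfrac{1}{2T}\abs{y-x}^2 + g(m_T,y)\}$, and the hypothesis that $g(m,\cdot)$ is convex makes this minimizer the unique solution $\Psi_{m_T}(x)$ of $\tfrac1T(y-x)+D_xg(m_T,y)=0$. Lifting by $Y=\Psi_{m_T}\circ X$ and using $D_\mu G = D_x g$ (Proposition \ref{pr:was lin}), the equilibrium condition $\sharp(Y)=m_T$ becomes exactly $\Phi_T(Y)=X$; conversely every critical point $Y$ satisfies $Y=\Psi_{\sharp(Y)}(X)$, which by convexity is a genuine global best response and hence an equilibrium. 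The same reduction shows any minimizer of $\s{J}$ concentrates on straight lines (equality in Jensen) carried by this deterministic coupling, so minimizers of $\s{J}$ correspond bijectively to minimizers of $\tilde{\s{K}}$.

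With this dictionary, $(1)\Leftrightarrow(2)$ is exactly the recalled equivalence ``$G$ displacement convex iff $g$ displacement monotone,'' applied through $D_\mu G = D_x g$. For \emph{sufficiency}, if $\tilde G$ is convex then $\tilde{\s{K}}$ is strictly convex (a strictly convex quadratic plus a convex term), so it has a unique critical point; by the correspondence this is simultaneously the unique equilibrium $(4)$ and the unique minimizer $(3)$, for every $T$. I would also record the cheap implication $(4)\Rightarrow(3)$: minimizers of $\s{J}$ are equilibria by Theorem \ref{thm:minimizers are equilibria} and at least one exists, so uniqueness of the equilibrium forces uniqueness of the minimizer.

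The substance is \emph{necessity}, in contrapositive form: if $\tilde G$ is not convex, exhibit, for a suitable $T$ and initial datum, multiple equilibria and multiple minimizers. Non-convexity furnishes $A,B\in\bb{H}$ with $\delta := \tilde G(\tfrac{A+B}{2}) - \tfrac12(\tilde G(A)+\tilde G(B)) > 0$; set $M=\tfrac{A+B}{2}$ and $X:=\Phi_T(M)$, so that $M$ is a critical point of $\tilde{\s{K}}$. A one-line parallelogram computation (whose $X$-dependent terms cancel) gives $\tfrac12(\tilde{\s{K}}(A)+\tilde{\s{K}}(B)) - \tilde{\s{K}}(M) = \tfrac{1}{8T}\enVert{A-B}^2 - \delta$, which is negative once $T > \enVert{A-B}^2/(8\delta)$. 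Hence $M$ is not the global minimum, so the existing global minimizer $Y_*$ is a \emph{second} critical point with $Y_* \neq M$. Since $M=\Psi_{\sharp(M)}(X)$ and $Y_*=\Psi_{\sharp(Y_*)}(X)$ with the same $X$, equal laws would force $M=Y_*$; thus $\sharp(M)\neq\sharp(Y_*)$ and they produce two distinct equilibria (one of which, $M$, is the expected non-minimizing equilibrium). This defeats $(4)$ and, via $(4)\Leftrightarrow(1)$, closes that half.

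The remaining and genuinely harder step is to defeat $(3)$, i.e.\ to produce two distinct \emph{global} minimizers. Observe that $(3)$ asserts precisely that the proximal map $\argmin_Y\{\tfrac12\enVert{Y-X}^2 + T\tilde G(Y)\}$ is single-valued for all $X$ and $T$. The plan is a Maxwell/double-tangent argument: for $T$ large the same parallelogram estimate makes $W_T$ non-convex, and a non-convex $W_T$ must admit a supporting affine functional touching its graph at two distinct points; equivalently, the convex envelope $W_T^{**}$ lies strictly below $W_T$ and is affine along a nondegenerate segment whose endpoints are both global minimizers of $W_T(\cdot)-\ip{X}{\cdot}$ for the common slope $X$. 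The main obstacle is making this contact-point argument rigorous in the infinite-dimensional setting without an assumed second-order structure on $\tilde G$ — in particular guaranteeing at least two exposed contact points. A convenient reduction I would use is to pass to empirical measures $m^n_{\bf y}$, where displacement non-convexity of $G$ is ordinary non-convexity of $G_n$ on $(\bb{R}^d)^n$ and the double-tangent construction becomes finite-dimensional, taking care to choose the coupling to $m_0$ optimal so that no spurious permutation contaminates the $W_2$ term.
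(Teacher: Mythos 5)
Your overall route --- lifting to $\bb{H}$ via Lemmas \ref{lem:straight lines} and \ref{lem:lift K}, identifying equilibria with solutions of $Y + TD\tilde G(Y) = X$, the midpoint/parallelogram computation, and the eventual reduction to empirical measures --- is essentially the paper's strategy, and your explicit construction of the non-minimizing critical point $M=\tfrac{A+B}{2}$, $X=\Phi_T(M)$, with the cancellation giving $\tfrac{1}{8T}\enVert{A-B}^2-\delta$, is correct. But there is a genuine gap in each of the two necessity steps. First, your production of a \emph{second} equilibrium rests on the claim that $\tilde{\s{K}}$ attains its infimum because it is ``coercive and norm-continuous.'' In an infinite-dimensional Hilbert space this is not a valid existence argument: the direct method requires weak lower semicontinuity, and $\tilde G = G\circ\sharp$ is not weakly lsc in general, since the law map is not continuous under weak $L^2$ convergence (an oscillating sequence $Y_k\rightharpoonup Y$ typically has $\sharp(Y_k)$ converging to a law quite different from $\sharp(Y)$). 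This is exactly why the paper's existence remark preceding the uniqueness subsection imposes an extra hypothesis ($G$ continuous on $\sr{P}_p$, $p<2$) and works with tightness on the measure side. So your global minimizer $Y_*$ may not exist, and the two-equilibria conclusion defeating (4) is unjustified as written. The repair is the device the paper uses: take $A,B$ (hence $X$) discrete, so that --- by your own dictionary, since convexity of $g(m,\cdot)$ forces every critical point to satisfy $Y=\Psi_{\sharp(Y)}(X)$ and hence to take at most $n$ values --- the whole problem collapses to $(\bb{R}^d)^n$, where coercivity plus continuity genuinely yields a global minimizer and thus a second critical point.

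Second, for defeating (3) you correctly flag the finite-dimensional reduction, but you omit the bridge that makes it conclusive: why do two minimizers of the \emph{restricted} functional $\tilde{\s{K}}_n$ on $(\bb{R}^d)^n$ yield two minimizers of the \emph{full} functional $\s{J}$ over $\sr{P}_{m_0}(\Gamma)$? A priori the restricted problem could have a strictly larger minimum than the unrestricted one. This is precisely where the standing hypothesis that $g(m,\cdot)$ is convex earns its keep in the paper (which notes it is used \emph{only} for this implication): arguing by contradiction, (3) itself grants existence of a full minimizer; that minimizer is a critical point, hence, for empirical initial data and by invertibility of $I+TD_xg(m,\cdot)$, it has an empirical law with $n$ atoms; therefore the full minimum equals the restricted minimum, and both restricted minimizers are genuine minimizers of $\s{J}$ (distinct as path measures because the couplings with $m_0$ differ, even if one vector is a permutation of the other), contradicting uniqueness. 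Without this step, multiplicity in the restricted problem contradicts nothing. (Leaving the finite-dimensional double-tangent/multiplicity fact itself as a sketch is not the issue; the paper also defers it, to \cite[Theorem A.1]{graber2023monotonicity}.)
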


	Before getting to the proof of Theorem \ref{thm:uniqueness}, we make a few remarks.
	First, it is natural to conjecture that the theorem holds even without the added assumption that $g(m,x)$ is convex with respect to $x$; however, the infinite dimensional structure of the problem makes it difficult to prove.
	Second, with this additional assumption, it is worth noting that LL monotonicity implies displacement monotonicity and therefore is sufficient to prove uniqueness \cite{gangbo2022mean}.
	Third, the assumption that $g(m,x)$ is convex with respect to $x$ implies that, for a given $m$, each player has only one optimal decision available.
	This has an important consequence for the structure of the game: it means that if we take an initial measure that is empirical (discrete) with $n$ points, then both the game and its potential become $n$-dimensional problems.
	This can be seen as follows.
	To start with the potential, we see that minimizers of $\tilde{\s{K}}$ satisfy the first order condition
	\begin{equation}
		\frac{Y-X}{T} + D \tilde G(Y) = 0 \Leftrightarrow Y + TD_x g(\sharp(Y),Y) = X.
	\end{equation}
	Since $g(m,x)$ is assumed to be convex with respect to $x$, the function $I + tD_x g(m,\cdot)$ is invertible for every $T \geq 0$ and $m \in \sr{P}_2(\bb{R}^d)$.
	The equilibrium is thus characterized by
	\begin{equation}
		Y = \del{I + TD_x g(\sharp(Y),\cdot)}^{-1}(X).
	\end{equation}
	In particular, if $\sharp(X)$ is an empirical measure of the form $m_{\bf x}^n = \frac{1}{n}\sum_{j=1}^n \delta_{x_j}$, then $\sharp(Y) = m_{\bf y}^n$ where $y_j = \del{I + tD_x g(m_{\bf y}^n,\cdot)}^{-1}(x_j)$.
	In this case, it is sufficient to optimize over vectors in $(\bb{R}^d)^n)$, i.e.~we can replace $\tilde{\s{K}}$ with
	\begin{equation} \label{Kn}
		\tilde{\s{K}}_n({\bf y}) = \frac{1}{2Tn}\abs{{\bf x} - {\bf y}}^2 + G_n({\bf y}).
	\end{equation}
	The same considerations apply to the game itself.
	For a given vector ${\bf z}$, each individual starting at a point $x \in \bb{R}^d$ seeks to minimize the cost
	\begin{equation}
		\frac{\abs{x-y}^2}{2T} + g(m_{\bf z}^n,y),
	\end{equation}
	which implies that player will choose
	\begin{equation}
		y = \del{I + TD_x g(m_{\bf z}^n,\cdot)}^{-1}(x).
	\end{equation}
	It follows that, starting from the discrete measure $m_{\bf x}^n$, the final measure will be $m_{\bf y}^n$ with $y_j = \del{I + TD_x g(m_{\bf z}^n,\cdot)}^{-1}(x_j)$.
	A Nash equilibrium in this case is equivalent to a vector ${\bf z}$ such that ${\bf y} = {\bf z}$, i.e.~ $y_j = \del{I + TD_x g(m_{\bf y}^n,\cdot)}^{-1}(x_j)$, which is the same condition as for minimizers of the potential.

	\begin{proof}[Proof of Theorem \ref{thm:uniqueness}]
		We have already observed that (1) and (2) are equivalent  \cite{gangbo2022global}.
		
		To see that (1) implies (3), note that if $G$ is displacement convex, then $\tilde{K}$ defined in \eqref{tildeK} is strictly convex, so it has a unique minimizer.
		If $m_T$ is a minimizer of $\s{K}$, then the proof of Lemma \ref{lem:lift K} implies it is the law of $Y$, hence $m_T$ is unique.
		Similarly, by the proof of Lemma \ref{lem:straight lines}, $\s{J}$ has a unique minimizer, as desired.
		
		To show that (1) implies (4), see \cite{gangbo2022global}.
		To see that (4) implies (3), note that every minimizer of $\s{J}$ is an equilibrium, so if there is more than one minimizer there is more than one equilibrium.
		
		To see that (4) implies (1), note that if $\s{J}$ has a unique minimizer, then $\tilde{\s{K}}$ must have a unique minimizer $Y$.		
		We are going to show that $\tilde{G}$ must be convex.
		Assume to the contrary that it is not convex.
		Then there exist random variables $X,Y$ such that 
		\begin{equation} \label{eq:not convex}
			\tilde{G}(X) > \frac{1}{2}\del{\tilde{G}(X+Y) + \tilde{G}(X-Y)}.
		\end{equation}
		Any $X \in \bb{H}$ can be approximated up to arbitrary precision by a discrete random variable whose law is $m_{\bf x}^n$ for some sufficiently large $n$.
		By the continuity of $G$, \eqref{eq:not convex} implies that for some sufficiently large $n$, $G_n$ is not convex.
		It follows that for some ${\bf x} \in (\bb{R}^d)^n$ and $T > 0$, $\tilde{\s{K}}_n$ defined in \eqref{Kn} has more than one minimizer, using arguments that hold in finite dimensional settings (see e.g.~\cite[Theorem A.1]{graber2023monotonicity}; note that the linear growth of $\tilde G$ assumed here is sufficient to make this argument go through).
		This is a contradiction; therefore, $\tilde G$ must be convex, i.e.~$G$ is displacement convex.
		The proof is complete.
	\end{proof}

	Notice that the proof of Theorem \ref{thm:uniqueness} uses the assumption that $g(m,x)$ is convex with respect to $x$ only to prove that (4) implies (1).
	If this implication could be proved without restricting to finite dimensions, then the extra assumption could be eliminated.

	\subsection{Hamilton-Jacobi equation and master equation}

	We now introduce the value function
	\begin{equation}
		U(t,\mu) := \inf\cbr{\frac{1}{2t}W_2(\mu,\nu)^2 + G(\nu) : \nu \in \sr{P}_2(\bb{R}^d)}.
	\end{equation}
	The ``lifted'' version of it is
	\begin{equation} \label{value hilbert}
		\tilde{U}(t,X) := \inf\cbr{\frac{1}{2t}\enVert{X-Y}^2 + \tilde{G}(Y) : Y \in \bb{H}} = U\del{t,\sharp(X)},
	\end{equation}
	where the second equality follows from Lemma \ref{lem:lift K}.
	By classical results \cite{crandall1985hamilton,crandall1986hamilton}, $\tilde{U}$ is the unique viscosity solution of the Hamilton-Jacobi equation
	\begin{equation} \label{eq:hj H}
		\partial_t \tilde{U} + \frac{1}{2}\enVert{D_X \tilde U}^2 = 0, \quad \tilde U(0,X) = \tilde G(X).
	\end{equation}
	By \cite{gangbo2019differentiability}, $U$ itself is the unique viscosity solution (see the reference for a definition) of the corresponding PDE on Wasserstein space:
	\begin{equation} \label{eq:hj W}
		\partial_t U + \frac{1}{2}\int_{\bb{R}^d} \abs{D_\mu U(t,m,x)}^2 \dif m(x) = 0, \quad U(0,m) = G(m).
	\end{equation}
	Let us suppose for the moment that $U$ is smooth.
	Taking a linear derivative of \eqref{eq:hj W}, we deduce that $U$ has a linear derivative $u = u(t,m,x)$ satisfying
	\begin{equation} \label{master eq}
		\partial_t u(t,m,x) + \frac{1}{2}\abs{D_x u(t,m,x)}^2 + \int_{\bb{R}^d} D_y u(t,m,y)\cdot D_\mu u(t,m,x,y)\dif m(y) = 0,
	\end{equation}
	and such that $u(0,m,x)$ is a linear derivative of $G$.
	Equation \eqref{master eq} is the \emph{master equation} for the mean field game.
	It can be seen as the limit as $N \to \infty$ of the system of $N$ coupled Hamilton-Jacobi equations for an $N$-player symmetric game, or it may be seen as the decoupling equation for the system of PDE given by \eqref{mfg system}; see e.g.~\cite{cardaliaguet2019master,bensoussan2015master,bensoussan2017interpretation} for a full exposition.
	In the context of potential mean field games, it is not a mere coincidence that the master equation \eqref{master eq} is obtained from taking the linear derivative of the Hamilton-Jacobi equation \eqref{eq:hj W}.
	Indeed, $U$ is the value function for the minimization of the potential, while $u$ is the value function for an individual player; more precisely, $u(t,m,x)$ is the minimal cost for a player starting at position $x$ with time horizon $t$, where $m$ is the initial distribution of players in the game.
	Just as the cost to individuals $J$ is assumed to have a potential $\s{J}$, the individual value function $u = \min J$ in turn has a potential $U = \min \s{J}$.
	Several results on classical solutions to the master equation are based on precisely this idea: start with a smooth solution to the Hamilton-Jacobi equation on the space of measures, and then differentiate to get the master equation \cite{bensoussan2018control,bensoussan2020control,gangbo2022global}; see also \cite{gangbo2015existence}.
	
	The master equation is, both in the present context and more generally, a nonlinear transport equation on the space of measures.
	The forward-backward system of PDE, of which \eqref{mfg system} is a standard class, can be seen as the characteristics for this transport.
	The whole problem of finding a Nash equilibrium can be viewed as a problem of identifying a characteristic that intersects a given point $(t,m)$.
	If we feed the system the measure $m_0 = m$ as an initial condition and the time $T = t$ as a time horizon, the solution $u(0,x)$ of \eqref{mfg system}, which is the value function over the whole time horizon for a representative player, can be identified as the solution to the master equation for inputs $(t,m,x)$.
	In the context of potential games, one might expect that the derivative of a Hamilton-Jacobi equation should be a Burgers type equation, and the characteristics of the one should correspond to the characteristics of the other.
	Indeed, this connection can be made clearer by considering $D_x u(t,m,x) = D_\mu U(t,m,x)$, which formally satisfies $D_x u(t,\sharp(X),X) = \tilde{\bf u}(t,X)$ where
	\begin{equation} \label{vector master}
		\partial_t \tilde{\bf u}(t,X) + \bb{E}\sbr{\tilde{\bf u}(t,X)\cdot D_X \tilde{\bf u}(t,X)} = 0,
		\quad \tilde{\bf u}(0,X) = D\tilde{G}(X).
	\end{equation}
	(Cf.~\cite{delarue2019restoring}.)
	This is often referred to as the \emph{vectorial master equation}.
	
	When we assume that $g(m,x)$ is convex with respect to $x$, we can make a more direct connection with classical theory.
	In this case define the restriction to empirical measures:
	\begin{equation}
		U_n(t,{\bf x}) = U(t,m_{\bf x}^n) = \inf\cbr{\frac{1}{2tn}\abs{{\bf x} - {\bf y}}^2 + G_n({\bf y})}.
	\end{equation}
	Then $U_n$ is the unique viscosity solution \cite{crandall1983viscosity,crandall1992user} to the Hamilton-Jacobi equation
	\begin{equation} \label{hj fin dim}
		\partial_t U_n + \frac{n}{2}\abs{D_{\bf x} U_n}^2 = 0, \quad U_n(0,{\bf x}) = G_n({\bf x}).
	\end{equation}
	By Krushkov's results \cite{kruvzkov1967generalized}, the gradient ${\bf u}_n(t,{\bf x}) = D_{\bf x} U_n$ is the unique entropy solution of the hyperbolic system
	\begin{equation} \label{burgers}
		\partial_t {\bf u}_n + n\del{{\bf u}_n \cdot D_{\bf x}}{\bf u}_n = {\bf 0}, \quad  {\bf u}_n(0,{\bf x}) = \frac{1}{n}D_x g(m_{\bf x}^n,{\bf x}),
	\end{equation}
	where we have used \eqref{eq:finite derivative}.
	Indeed, the well-posedness of \eqref{burgers} is a corollary of the well-posedness of \eqref{hj fin dim}; much less is known about hyperbolic systems that are not the gradient of a Hamilton-Jacobi equation.
	
	\subsection{Selection}
	
	We have seen that a potential game could very well have multiple Nash equilibria (Theorem \ref{thm:uniqueness}).
	Indeed, one need only take particular instances of \eqref{burgers} for which it is well-known that characteristics cross, for instance by taking $d = n = 1$ and assuming $D_x g(\delta_x,x)$ is a decreasing function of $x$.
	The problem of selecting a particular Nash equilibrium for a mean field game has been a subject of active research in recent years.
	First of all, Delarue has shown that uniqueness in mean field games can be restored by adding a common noise \cite{delarue2019restoring}.
	This suggests a strategy akin to Selten's idea in defining trembling hand perfect equilibrium: if we add a common noise to the game and allow the intensity to converge to zero, this will select one of multiple equilibria.	
	In this vein, Delarue and Tchuendom studied a linear-quadratic mean field game, and showed that the Burgers equation, not coincidentally played a key role in selection \cite{delarue2020selection}.
	Cecchin and Delarue showed that, for a finite state mean field game, the vanishing noise limit would select the entropy solution to the master equation \cite{cecchin2022selection}.
	Presumably motivated by these results, Cecchin and Delarue have also introduced a notion of weak solution for potential mean field games with continuous state space \cite{cecchin2022weak}, and it is precisely the kind of ``entropy solution'' one should expect based on the following analogy.
	The derivative of the unique viscosity solution of a (classical) Hamilton-Jacobi equation is the unique entropy solution of the hyperbolic system obtained by taking the gradient of the Hamilton-Jacobi equation; the master equation is the derivative of a Hamilton-Jacobi equation on the space of measures; therefore, the derivative of the solution to the Hamilton-Jacobi equation on the space of measures should be the unique entropy solution to the master equation.
	Using a highly technical approach, Cecchin and Delarue introduce a measure $\bb{P}$ on $\sr{P}_2(\bb{T}^d)$ (where $\bb{T}^d$ is the torus in $d$ dimensions) such that uniqueness of the entropy solution holds in a $\bb{P}$-a.e.~sense.
	Their approach involves approximating measures using Fourier coefficients, which is suitable in the case where diffusion terms appear in the dynamics so that the continuity equation becomes a second-order PDE with a Laplacian.
	Establishing a general uniqueness theorem for entropy solutions in the first-order case seems to be an open problem.
	
	Let us now clarify why the concept of entropy solution is a natural choice for a selection principle.
	The characteristics for the (vectorial) master equation \eqref{vector master} are curves $X(s)$ satisfying
	\begin{equation} \label{characteristics}
		\dot{X}(s) = \tilde{\bf u}(s,X(s)), \quad \od{}{s}\tilde{\bf u}(s,X(s)) = 0,
	\end{equation}
	hence $\tilde{\bf u}(s,X(s)) = D\tilde G(X(0))$ for all $s$.
	Setting $Y = X(0)$, we deduce that a characteristic hitting $X$ at time $t$ is given by $X(s) = Y + sD\tilde{G}(Y)$ where $Y$ solves $Y + tD\tilde{G}(Y) = X$.
	Rewriting this condition as $Y + tD_xg(\sharp(Y),Y) = X$, we see it is exactly the same as the definition of Nash equilibrium (i.e.~given that $Y$ represents the anticipated final distribution, $Y$ is the optimal response from starting position $X$).
	We see that the Nash equilibrium is unique if and only if characteristics do not cross.
	The classical theory of entropy solutions was developed precisely to deal with the case when characteristics do cross; see e.g.~ \cite{kruvzkov1967generalized,kruvzkov1970first,dafermos1983hyperbolic,lax1973hyperbolic,smoller2012shock}.
	Of course, there are two key differences between the present question of finding Nash equilibria and the classical problem of studying nonlinear fluid flow.
	The first difference is that our present setting is infinite dimensional.
	This can be partially alleviated if we take the assumption that $g(m,x)$ is convex with respect to $x$, and we take our initial measure to be discrete.
	Then the equilibrium corresponds to finding characteristics in $(\bb{R}^d)^n$ of the classical conservation law \eqref{burgers}.
	
	The second difference is apparently much more fundamental.
	In the classical theory of conservation laws, we are interested in fluid flow, and therefore our criterion for selecting a solution to the PDE should be motivated by physics.
	Indeed, the name ``entropy solution'' comes from the principle that physical entropy must increase with time.
	On the other hand, if we are trying to choose a Nash equilibrium, it is far from clear whether the notion of increasing entropy is meaningful.
	Nevertheless, we there is at least one other physical interpretation that could be useful: by \cite{kruvzkov1967generalized}, the unique entropy solution to \eqref{burgers} is the limit as $\varepsilon \to 0$ of solutions to the parabolic equations
	\begin{equation} \label{burgers1}
		\partial_t {\bf u}_n + n\del{{\bf u}_n \cdot D_{\bf x}}{\bf u}_n = \varepsilon \Delta{\bf u}_n, \quad  {\bf u}_n(0,{\bf x}) = \frac{1}{n}D_x g(m_{\bf x}^n,{\bf x}).
	\end{equation}
	The Laplacian term $\varepsilon \Delta{\bf u}_n$ comes from diffusion, which in physics arise from viscosity of a fluid but in a game can arise from uncertainty or ``noise.''
	The interpretation is that players cannot be certain the crowd will evolve according to the characteristic ${\bf x}(s)$ (or, more properly speaking, $m_{{\bf x}(s)}^n$), so they assume that ${\bf x}(s)$ is perturbed by a Brownian motion with small variance and that the value ${\bf u}_n(s,{\bf x}(s))$ is a martingale.
	The perturbation is thus a ``common noise,'' which affects the trajectory of the whole crowd, to be distinguished from idiosyncratic noise, which affects each player independently \cite{carmona2016mean}.
	The entropy solution of \eqref{burgers} thus gives us the \emph{vanishing common noise limit equilibrium}, provided the common noise is Brownian motion.
	
	Here we see the basic idea of Selten \cite{selten1975,harsanyi1988general} coming through quite clearly.
	Recall that when the set of strategies is finite, players' uncertainty can be expressed by assigning a uniform small probability to every alternative strategy, and the result will be ``trembling hand perfect equilibrium.''
	Since the strategy set in this game is infinite, it is not possible to perturb the set of actions by a uniform distribution, but it is possible to choose a normal distribution (via Brownian motion), and this, too, will result in a refinement of Nash equilibrium (the vanishing common noise limit).
	As for the fact that we have restricted to finite dimensions, this can be interpreted in the following way.
	Given an arbitrary initial distribution, players will all take the same discrete approximation of it.
	They will then play the game as if that discrete approximation were the true distribution.
	In principle, the final discrete distribution should be a good approximation of the true equilibrium.
	
	Notice that the vanishing common noise limit equilibrium, which is the entropy solution, is also given by the gradient of the value function for the potential.
	In fact, we can use the potential to designate a selection principle without restricting to finite dimensions.
	Observe that for $t > 0$, the value function $\tilde U$ given in \eqref{value hilbert} is Lipschitz and semi-concave, i.e.~there exists a constant $C$ such that
	\begin{equation}
		\lambda \tilde U(t,X) + (1-\lambda) \tilde U(t,Z) - \tilde U(t,\lambda X + (1-\lambda)Z) \leq \frac{C}{t}\lambda(1-\lambda)\enVert{X-Z}^2
	\end{equation}
	for all $\lambda \in (0,1)$ and $X,Z \in \bb{H}$.
	(In this case we can take $C = 1$.)
	By a result of Albano and Cannarsa \cite{albano1999singularities}, the set of points $\Sigma$ on which $\tilde U(t,X)$ is not differentiable is countably $\infty - 1$ rectifiable, meaning that it is contained in the countable union of sets, each of which is the image under a Lipschitz map of a bounded subset of a subspace with codimension 1.
	In other words, the set of $X$ for which $D_X \tilde U(t,X)$ exists is ``very large'' in a certain sense, similar to the structure of BV functions \cite{evans92}.
	For such $X$, we see that $Y = X - tD_X \tilde U(t,X)$ is the unique minimizer of the potential; this follows from standard arguments showing that the optimal control is unique at points where the value function is differentiable \cite{cannarsa2004semiconcave}.
	In the present context, we can see this by observing that if $Y$ is any minimizer then
	\begin{equation}
		\tilde U(t,X+Z) - \tilde U(t,X) \leq \frac{\enVert{X+Z-Y}^2}{2t} + \tilde G(Y) - \frac{\enVert{X-Y}^2}{2t} - \tilde G(Y) = \ip{\frac{X-Y}{t}}{Z} + \frac{1}{2t}\enVert{Z}^2
	\end{equation}
	for all $Z$, hence $\frac{X-Y}{t} = D_X \tilde{U}(t,X)$.
	
	Note that even when the minimizer of the potential is unique, the equilibrium may not be unique.
	A simple but illustrative case is when $G(m) = \int \phi \dif m$ for some smooth function $\phi$ such that $\phi'(x) = 1$ when $x \leq -1$, $\phi'(x) = 0$ when $x \geq 0$, and $\phi'(x) = -x$ when $-1 \leq x \leq 0$.
	Take $d = 1$ and take the initial distribution to be a Dirac mass $\delta_x$.
	The Nash equilibrium condition is to find a $y$ such that $y + t\phi'(y) = x$, while the potential to be minimized is $\frac{\del{x-y}^2}{2t} + \phi(y)$.
	One checks that, for $t > 1$ (the time when the shock occurs) and for $0 < x < t$, there are three equilibria given by $y = x$, $y = x-t$, and $y = x/(1-t)$, but the only minimizer is $y = x$ when $x > (t-1)/2$ (to the right of the shock) and $y = x-t$ when $x < (t-1)/2$ (to the left of the shock).
	This corresponds precisely to solving the Burgers equation with a shock.
	
	Since the minimizer $Y = X + tD_X\tilde U(t,X)$ selects only one of possibly many equilibria, we can say that minimizing the potential is itself a sort of selection principle.
	It is unclear whether it can be interpreted as a vanishing noise limit; the difficulty is both in defining the noise on an infinite dimensional space (but see \cite{delarue2019restoring}) and in proving rigorously that the limit has the desired meaning.
	If we assume that $g(m,x)$ is convex with respect to $x$, then for $\sharp(X) = m_{\bf x}^n$ we have $D_X \tilde U(t,X) = {\bf u}_n(t,{\bf x})$, which is exists for a.e.~${\bf x} \in (\bb{R}^d)^n$ (in the sense of Lebesgue measure) and agrees with the entropy solution of \eqref{burgers}.
	For this reason a sort of vanishing noise limit interpretation is plausible, at least in some cases.
	However, the general case seems to be quite open.
	
	Even if we can resolve the infinite dimensionality of the problem, there are more general difficulties in justifying the selection principle proposed here.
	One difficulty, which is highlighted by the work of Lions and Seeger \cite{lions2023linear}, is that there are infinitely many types of vanishing noise one can add to characteristics, each resulting in a different selection.
	Indeed, given \emph{any} curve $c(t)$ such that $c'(t) \in (0,1)$ for all $t \geq 0$, there is a function $\theta_c(t)$ such that the limit of solutions to the PDE
	\begin{equation}
		\partial_t u + u\partial_x u = \varepsilon\del{\partial_{xx}^2 u + \theta_c(t)\abs{\partial_x u}^2}, \quad u(0,x) = \begin{cases}
			1, \quad x < 0,\\
			0, \quad x > 0
		\end{cases}
	\end{equation}
	is the piecewise defined ``solution'' to the Burgers equation given by
	\begin{equation}
		u(t,x) = \begin{cases}
			1, \quad x < c(t),\\
			0, \quad x > c(t).
		\end{cases}
	\end{equation}
	See \cite[Proposition 5.4 and Theorem 5.4]{lions2023linear}.
	The meaning of this result is that the region in which characteristics cross can be divided into two regions using a more or less arbitrary ``shock'' curve (which in general does not at all correspond to the physical entropy criterion), and the selected solution can still be obtained by a vanishing noise limit.
	Therefore, the phrase ``vanishing noise limit'' does not uniquely specify a selection criterion.
	
	Two possible responses may be given in favor of the entropy solution.
	First, for any other vanishing noise limit, the perturbed characteristics are not martingales.
	If the notion of martingale is interpreted as an ``unbiased'' stochastic perturbation, then that could be seen as an argument in favor of the entropy solution.
	On the other hand, if there is reason to suspect that the noise will be biased in one direction or another, then the entropy solution no longer provides the appropriate selection principle.
	The second possible response is that, as we have seen, the entropy solution also corresponds to choosing minimizers of the potential.
	If there is a plausible evolutionary mechanism by which the population converges only to minimizers of the potential (cf.~\cite{cardaliaguet2017learning,sandholm2001potential,monderer1996fictitious}), then it is natural to use the entropy solution to select the appropriate Nash equilibrium.
	On the other hand, this justification is contingent on finding and arguing for such an evolutionary mechanism.
	
	Other problems using entropy solutions as a selection principle can arise when the mean field game is \emph{not} potential.
	See \cite{graber2024some} for a discussion of some examples.
	
	\section{Conclusion}
	
	In this article, we have briefly surveyed the concept of a potential mean field game and connected it to what was already known about potential games.
	We have only discussed the first-order case.
	Second-order problems may be more or less challenging, depending on the structure of the second-order terms and how they affect the dynamics of the game.
	For instance, adding an idiosyncratic noise that affects each individual independently could make equations in \eqref{mfg system} strictly parabolic, in which case a number of PDE techniques can be brought to bear on the analysis \cite{porretta2015weak,gomes2015time,gomes2015time2,gomes2016time,cardaliaguet2012long}.
	On the other hand, the addition of a common Brownian motion, affecting all individuals simultaneously, does not appear to add additional smoothness to solutions of the standard master equation \cite{cardaliaguet2019master}.
	Insofar as we have dealt with the idea of a common noise in this article, it has only been in the case where the game is essentially finite dimensional; the infinite dimensional analog is not trivial to define \cite{delarue2019restoring}.
	
	In any case, restricting to the first-order case has allowed us, first of all, to provide a transparent proof that minimizers of the potential are also Nash equilibria, defined according to a Lagrangian framework.
	This can be done even in state-constrained problems.
	Second, we have been able to make a clear connection between solving a mean field game and analyzing characteristics of a nonlinear hyperbolic balance law.
	This has the benefit of motivating potential approaches to the selection of equilibria, which is a major open problem in mean field game theory.
	The recent breakthrough by Cecchin and Delarue \cite{cecchin2022weak} seems to confirm that potential mean field games are the natural place to start a rigorous analysis of selection principles.
	It is to be hoped that the present article helps clarify the main issues, and that it may serve as a catalyst for new developments.
	
	\medskip
	
	\noindent {\bf Conflict of interest.} The author declares that he does not have any conflicts of interests.	
	
	\medskip
	
	\noindent {\bf Data Availability Statement.} Data sharing not applicable to this article as no datasets were generated or analyzed during the current study.
	
	\bibliographystyle{siam}
	\bibliography{../../mybib/mybib}
\end{document}